\documentclass{amsart}%
\usepackage{amsfonts}
\usepackage{amssymb}
\usepackage{amsmath}
\usepackage{graphicx}%
\newtheorem{theorem}{Theorem}[section]
\theoremstyle{plain}

\newtheorem{corollary}[theorem]{Corollary}

\newtheorem{remark}{Remark}[section]

\numberwithin{equation}{section}
\begin{document}
\title[FACTORIZATION OF ELLIPTIC\ DIVISIBILITY SEQUENCES\ ]{FACTORIZATION OF ELLIPTIC\ DIVISIBILITY SEQUENCES\ VIA\ MODULAR\ CURVES}
\author{BET\"{U}L GEZER, \.{I}PEK\ \c{C}OLAK, OSMAN B\.{I}Z\.{I}M}
\address{Uludag University, Faculty of Science, Department of Mathematics,
G\"{o}r\"{u}kle, 16059, Bursa-TURKEY}
\email{betulgezer@uludag.edu.tr, ipekk3353@gmail.com, obizim@uludag.edu.tr}
\date{06. 07. 2026.}
\subjclass{14H52, 11G05, 11G07, 11B37.}
\keywords{Elliptic curves, division polynomials, elliptic divisibility sequences.}
\maketitle

\begin{abstract}
In this paper, we give a structural factorization of elliptic divisibility
sequences in terms of finitely many modular functions on the modular curve
$X_{1}(N)$. More precisely, we express the functions obtained by pulling back
division polynomials along the universal torsion section as products of fixed
modular factors. Then we show that this factorization is compatible with the
natural action of $(\mathbb{Z}/N\mathbb{Z})^{\times}$ on the modular curve
$X_{1}(N)$ and so descends to the modular curve $X_{0}(N)$ via the norm map.
As applications, we obtain a perfect power criterion, local and global growth
estimates for the associated elliptic divisibility sequences.

\end{abstract}
\tableofcontents

\section{Introduction}

Let $E$ be an elliptic curve defined over a field $K$ given by a Weierstrass
equation%
\begin{equation}
E:y^{2}+a_{1}xy+a_{3}y=x^{3}+a_{2}x^{2}+a_{4}x+a_{6}\text{, } \label{111}%
\end{equation}
let $E(K)$ denote the group of $K$-rational points on $E$, and let
$\mathcal{O}$ denote the point at infinity, the identity element of the group
$E(K)$. For background on elliptic curves, see \cite{JS7,JS1, JS2}. The
$n$-\textit{division polynomial }$\psi_{n}$\textit{\ }of $E$\textit{\ }is the
unique function $\psi_{n}$ $\in K(E)$ with divisor%
\[
\operatorname{div}(\psi_{n})=[n]^{\ast}(\mathcal{O})-n^{2}(\mathcal{O}).
\]
Division polynomials play a central role in the theory of elliptic curves,
e.g. they appear in height theory \cite{JS1}, in reduction theory \cite{JS7},
in algorithms for counting points on elliptic curves over finite fields
\cite{ReneS}, and in the theory of elliptic divisibility sequences (EDSs)
\cite{MW1}. See also \cite{JS3}, \cite{CH}, \cite{SP}.

The sequence of values $(\psi_{n}(P))_{n\geq0}$ of the division polynomials
evaluated at a point $P$ satisfies the classical elliptic divisibility
recurrence relations, see \cite{MW1}. Hence, this gives rise to an elliptic
divisibility sequence $(W_{n})_{n\geq0~}$associated to the pair $(E,P)$ where
$W_{n}=\psi_{n}(P)$. The sequence $(W_{n})_{n\geq0}$ is called the
\textit{elliptic divisibility sequence }associated to the pair $(E,P)$.

Pairs $(E,P)$ with prescribed torsion structure are parametrized by modular
curves, providing a geometric way to study families of elliptic curves
together with torsion points. This approach leads to a modular interpretation
of elliptic divisibility sequences. Furthermore, we use the modular curves
$Y_{1}(N)$ and $X_{1}(N)$ to analyze the sequences $(W_{n})_{n\geq0}$, where
affine curve $Y_{1}(N)$ parametrizes pairs $(E,P)$ with a point $P$ of order
$N$, and $X_{1}(N)$ denotes its smooth projective compactification.

The aim of this paper is to interpret elliptic divisibility sequences
$(W_{n})_{n\geq0}$ associated to elliptic curves together with torsion points
in terms of modular functions on $Y_{1}(N)$. The terms of an EDS can therefore
be viewed as the specialization at $(E,P)$ of modular functions on $Y_{1}(N)$.
Thus they are not merely quantities depending on a chosen Weierstrass
equation, they arise from universal modular functions on $Y_{1}(N)$. We make
this interpretation explicit and show that these functions admit finite
modular factorizations with quadratic exponents depending only on $n$ modulo
$N$.

Furthermore, if we want these sequences to depend only on the elliptic curve
over $K$ and the point $P$ $\in E(K)$, but independent of the chosen model, we
are naturally led to a canonical definition of elliptic divisibility
sequences, obtained by pulling back the division polynomials $\psi_{n}$ along
a section. More precisely, there exists a universal elliptic curve
$\mathcal{E\rightarrow}$ $Y_{1}(N)$ with universal section $\mathcal{P}$ $:$
$Y_{1}(N)~\mathcal{\rightarrow}$\ $\mathcal{E}$. Pulling back the $n$-th
division polynomial $\psi_{n}$ along the section $\mathcal{P}$ defines a
modular function
\begin{equation}
f_{n}=\mathcal{P}^{\ast}(\psi_{n})\in K(Y_{1}(N)) \label{1}%
\end{equation}
where $K(Y_{1}(N))$ is the function field of the modular curve $Y_{1}(N)$, if
$n\mathcal{P}=\mathcal{O}$, then $f_{n}$ vanishes identically. Now let
$Y_{1}(N)(K)$ denote the $K$-rational points on $Y_{1}(N)$, then for any
$(E,P)\in Y_{1}(N)(K)$, evaluating $f_{n}$ at $(E,P)$ recovers the associated
elliptic divisibility sequence, i.e.,
\[
W_{n}=f_{n}(E,P).
\]
Elliptic divisibility sequences over function fields have been studied by
Ingram et al. \cite{IP} and others \cite{CR,N1,N2}. In the present work, we
consider the functions $f_{n}$ arising from the universal elliptic curve over
$Y_{1}(N)$ together with its universal section. This allows elliptic
divisibility sequences to be interpreted in terms of modular functions and the
geometry of modular curves.

In \cite{BG}, elliptic divisibility sequences associated to elliptic curves
over $%
%TCIMACRO{\U{211a} }%
%BeginExpansion
\mathbb{Q}
%EndExpansion
$ were studied for specific torsion structures using explicit models of
elliptic curves. By viewing these sequences as specializations of the
functions $f_{n}$ at points of $Y_{1}(N)$, we extend these results to elliptic
curves over arbitrary number fields and to arbitrary levels $N$. We further
carry out these results on the modular curves $X_{0}(N)$, which parametrize
elliptic curves together with cyclic subgroups $C$ of order $N$. Thus, we
obtain a geometric interpretation of elliptic divisibility sequences both in
terms of points and cyclic isogenies.

Our first main result gives a structural factorization of EDSs in terms of
finitely many modular functions on $X_{1}(N)$.

\begin{theorem}
Let $N\geq2$, let $Y_{1}(N)$ be the affine modular curve parametrizing
elliptic curves together with a point of order $N$, and let $X_{1}(N)$ be its
smooth compactification with the function field $K(X_{1}(N))$ over a number
field $K$. Suppose that $(E,P)$ correspond to a $K$-rational point of
$Y_{1}(N)$, and let $(W_{n})_{n\geq0}$ be the elliptic divisibility sequence
associated to $(E,P)$ via division polynomials $\psi_{n}$. Then there exist
finitely many modular functions $g_{1},\dots,g_{r}\in K(X_{1}(N))^{\times}$
depending only on $N$ and on the chosen model of $X_{1}(N)$, such that
\begin{equation}
W_{n}=\prod_{i=1}^{r}g_{i}^{\alpha_{i}(n)}(E,P)\text{ \ \ \ \ \ \ } \label{2}%
\end{equation}
for all $n\geq1$ such that $N\nmid n$, here each $\alpha_{i}(n)$ is a
polynomial in $n$ of degree at most $2$ whose values depend only on the
residue class of $n$ modulo $N$, and each function $g_{i}$ has a divisor
supported on the cusps and a finite set of points determined by the chosen
model of $X_{1}(N)$.
\end{theorem}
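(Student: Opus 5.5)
We work with the universal pair $(\mathcal{E},\mathcal{P})$ over $Y_{1}(N)$ and the functions $f_{n}=\mathcal{P}^{\ast}(\psi_{n})$ of (\ref{1}), so that $W_{n}=f_{n}(E,P)$ for every $K$-point $(E,P)$ of $Y_{1}(N)$. It therefore suffices to prove the identity $f_{n}=\prod_{i}g_{i}^{\alpha_{i}(n)}$ in $K(X_{1}(N))^{\times}$ for all $n$ with $N\nmid n$ (for these $n$, $n\mathcal{P}\neq\mathcal{O}$, so $f_{n}\not\equiv0$), and then specialize at $(E,P)$. Specialization is legitimate as long as $(E,P)$ avoids the finite set of non-cuspidal zeros and poles of the $g_{i}$. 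These zeros and poles come from where the fixed model degenerates or acquires extra automorphisms, which is exactly the ``finite set of points determined by the chosen model'' in the statement.

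The key tool is the standard quasi-periodicity of division-polynomial values, i.e.\ the solution of the EDS recurrence for a torsion point (Ward). Write $n=qN+s$ with $0<s<N$. The function $\Psi_{n}(P):=\psi_{n}(P)$ satisfies the net/EDS relations, and from the divisor $\operatorname{div}(\psi_{n})=[n]^{\ast}(\mathcal{O})-n^{2}(\mathcal{O})$ one derives the identity
\[
\psi_{qN+s}=\psi_{s}\,A^{q^{2}}B^{q}\qquad\text{at a point of exact order }N,
\]
for certain units $A,B$ depending only on $(E,P)$. Concretely, one can take $B=\psi_{N+1}$ and $A=\psi_{N+1}\psi_{N-1}^{-1}\cdots$ in the usual normalisation, e.g.\ $A=\psi_{N-1}(P)^{-1}$ up to the standard convention $W_{kN+s}=W_{s}\,a^{k s}\,b^{k^{2}}$ with $a=W_{N+1}/ (\ldots)$. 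I would prove this by checking that both sides satisfy the elliptic divisibility recurrence in $n$ with matching initial terms, using the recurrence identity $W_{m+n}W_{m-n}W_{1}^{2}=W_{m+1}W_{m-1}W_{n}^{2}-W_{n+1}W_{n-1}W_{m}^{2}$ together with $W_{N}=0$. The cleaner route is the theta/sigma-function representation $\psi_{n}(z)=\sigma(nz)/\sigma(z)^{n^{2}}$ and the quasi-periodicity of $\sigma$ under lattice translation by $Nz\in\Lambda$. That representation gives immediately $\psi_{qN+s}(P)=\psi_{s}(P)\,\beta^{q^{2}}\gamma^{qs}\epsilon^{q}$ with $\beta,\gamma,\epsilon$ algebraic in $(E,P)$.

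Running this universally over $Y_{1}(N)$ makes the quantities $\beta,\gamma$ (and the finitely many $f_{s}$, $1\le s<N$) rational functions on $X_{1}(N)$. Take $g_{1},\dots,g_{r}$ to be $f_{1},\dots,f_{N-1}$ together with $\beta$ and $\gamma$, each expressed as ratios of $f_{N\pm1}$, $f_{s}$. The exponents are then $0$ or $1$ on $f_{s}$, with $q^{2}$ and $qs$ on the others. Since $q=(n-s)/N$, these are polynomials in $n$ of degree at most $2$ whose coefficients are determined by $s=n\bmod N$, as claimed.

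The divisor statement follows because on $Y_{1}(N)$ each $f_{m}$ with $N\nmid m$ vanishes only where $m\mathcal{P}=\mathcal{O}$, which never happens. So its divisor is supported on the cusps, plus the poles and zeros created by the leading coefficients and discriminant of the chosen Weierstrass model over $Y_{1}(N)$; those form a finite set. The main obstacle will be the quasi-periodicity identity: pinning down the exact form of $\beta,\gamma$ algebraically (not just analytically via $\sigma$, which needs a complex embedding of $K$ and a descent argument), and verifying that they are genuinely rational over $K$ on $X_{1}(N)$. I would first try the purely algebraic recurrence induction, which avoids any descent issue.
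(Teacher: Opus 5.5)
Your route differs from the paper's, and once cleaned up it gives a complete proof.

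\textbf{Comparison with the paper.} The paper works only with divisors. It asserts, without real justification, that the cusp multiplicities of $f_n$ are quadratic in $n$ on residue classes. It then expands $\operatorname{div}(f_n)$ in a basis $D_1,\dots,D_r$ of the lattice of principal divisors supported on the cusps and the model's bad points, obtaining $f_n=c_n\prod_i g_i^{\alpha_i(n)}$ with an undetermined $c_n\in K^\times$. Finally it drops $c_n$ by ``identifying $c_nf_n$ with $f_n$''.

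You instead apply Ward's quasi-periodicity to the EDS $(f_n)$ over the field $K(X_1(N))$. This gives the exact identity $f_{qN+s}=f_s\,a^{qs}b^{q^2}$ for $0<s<N$, and you take the $g_i$ to be $f_1,\dots,f_{N-1},a,b$. This buys two things:
\begin{itemize}
\item It proves the quadratic dependence, since $\operatorname{ord}_Q f_n=\operatorname{ord}_Q f_s+qs\operatorname{ord}_Q a+q^2\operatorname{ord}_Q b$.
\item It leaves no stray constants. Writing $f_s,a,b$ as constants $c_s,\mu,\nu$ times products of primitive units shows $c_n=c_s\mu^{qs}\nu^{q^2}$. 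This is exactly why the sign $\varepsilon$ in the Appendix depends on $n \bmod 2N$.
\end{itemize}
What the paper's route offers in exchange is a finer factorization into primitive units such as $u,v,uv-1$. Your $g_i$ are the composite functions $f_s,a,b$.

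\textbf{First fix: the shape and values of $a,b$.} Your opening display $\psi_{qN+s}=\psi_sA^{q^2}B^q$ with $A,B$ independent of $s$ is false, because $f_{N+s}/f_s=a^sb$ depends on $s$. Your guesses for $A,B$ are also wrong. Putting $q=1$ and $s=1,-1,2$ gives $ab=f_{N+1}$, $b/a=-f_{N-1}$ and $a^2b=f_{N+2}/f_2$. Hence for $N\ge 3$
\[
a=-\frac{f_2f_{N+1}^2}{f_{N-1}f_{N+2}},\qquad b=-\frac{f_{N-1}f_{N+2}}{f_2f_{N+1}},
\]
which settles your rationality worry at once. For $N=2$ these formulas break down because $f_2=0$; use $f_{2k+1}=(-1)^k(-f_3)^{k(k+1)/2}$ directly.

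\textbf{Second fix: how to prove quasi-periodicity.} Do not try to derive it from the one-variable recurrence alone. For $N=3$, the recurrence with $W_1=1$, $W_2\neq 0$, $W_3=0$ admits sequences with $W_4$ arbitrary. Ward's shape, however, forces $W_4=-W_2^5$. That relation does hold for $\psi_4(P)$ at a $3$-torsion point, but it is input from the curve, not from the recurrence.

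The $\sigma$-function route has no descent problem. Since $a,b$ are explicit elements of $K(X_1(N))$, the identity can be checked after embedding $K(X_1(N))$ into $\mathbb{C}$, or on the Zariski-dense set of complex points of $Y_1(N)$. There, with $z$ an elliptic logarithm of $P$ and $\lambda=Nz$, the relation $\sigma(z'+\lambda)=\pm e^{\eta(\lambda)(z'+\lambda/2)}\sigma(z')$ yields the shape $f_s\,\epsilon^q\gamma^{qs}\beta^{q^2}$ with $\epsilon=\pm1$. Since $q\equiv q^2 \pmod 2$, one has $\epsilon^q\beta^{q^2}=(\epsilon\beta)^{q^2}$. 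Matching $q=1$, $s=\pm1,2$ then gives $\gamma=a$ and $\epsilon\beta=b$.
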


The proof relies on the existence of a universal elliptic curve $\mathcal{E}$
$\mathcal{\rightarrow}$ $Y_{1}(N)$ with a universal torsion section
$\mathcal{P}$ of exact order $N$. Pulling back the division polynomials
$\psi_{n}$ along the section $\mathcal{P}$\ we obtain functions on $Y_{1}(N)$
and extend these functions to rational functions on $X_{1}(N)$. Then we use
divisor theory and evaluate these functions at $(E,P)$ to express $W_{n}$ in
terms of these modular functions.

We can extend Theorem 1.1 to the modular curves $X_{0}(N)$, which parametrize
elliptic curves together with cyclic subgroups $C$ of order $N$. Since
$X_{0}(N)$ parametrizes cyclic subgroups without choice of generator, we
choose a generator $P$ $\in$ $C$ to lift $(E,C)$ to a point of $X_{1}(N)$; see
\cite[Section 1.5]{DS}. This lifting is well-defined up to the natural action
of $(%
%TCIMACRO{\U{2124} }%
%BeginExpansion
\mathbb{Z}
%EndExpansion
/N%
%TCIMACRO{\U{2124} }%
%BeginExpansion
\mathbb{Z}
%EndExpansion
)^{\times}$. Finally, by taking suitable products over all generators of $C$,
we obtain canonical functions on $X_{0}(N)$. Then we show that the
factorization structure descends to this setting.

The following theorem shows that the evaluation of the modular function
$f_{n}$ in (\ref{1}) at a lifted point gives an elliptic divisibility sequence
and that the factorization structure extends to this case.

\begin{theorem}
With notation and assumptions as in Theorem 1.1, let $f_{n}$ be as in
(\ref{1}). Let $(E,C)$ be a point of $Y_{0}(N)(K)$, where $C$ is a cyclic
subgroup of order $N$, and choose a generator $P\in C$, yielding a lift
$(E,P)\in Y_{1}(N)(K)$. Then the sequence%
\[
W_{n}=f_{n}(E,P)
\]
is an elliptic divisibility sequence. Moreover, this sequence satisfies the
factorization in (\ref{2}).
\end{theorem}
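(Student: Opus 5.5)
The argument reduces Theorem 1.2 to Theorem 1.1 and to the classical fact that division polynomial values at a point satisfy the elliptic divisibility recurrence. The only new ingredient is the lift from $Y_{0}(N)$ to $Y_{1}(N)$, so we treat that first.

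We fix $(E,C)\in Y_{0}(N)(K)$ and a generator $P\in C$. Since $C$ is a cyclic subgroup of order $N$ and $P$ generates it, $P$ has exact order $N$. Hence $(E,P)$ is a point of $Y_{1}(N)$. We need it to be $K$-rational, i.e.\ $P\in E(K)$, which we take as part of the hypothesis ``yielding a lift $(E,P)\in Y_{1}(N)(K)$''; this holds, for instance, when $C\subset E(K)$. We also record that the choice of generator is only defined up to $P\mapsto aP$ with $a\in(\mathbb{Z}/N\mathbb{Z})^{\times}$, i.e.\ up to the diamond automorphism $\langle a\rangle$ of $X_{1}(N)$. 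The statement concerns one fixed choice, so this ambiguity plays no role here.

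Next, using the universal pair $(\mathcal{E},\mathcal{P})$ over $Y_{1}(N)$, we identify the fibre of $\mathcal{E}$ over the point $(E,P)$ with $E$ in a chosen Weierstrass model, with $\mathcal{P}$ specializing to $P$. Then
\[
f_{n}(E,P)=(\mathcal{P}^{\ast}\psi_{n})(E,P)=\psi_{n}(P),
\]
because pulling back along a section commutes with specialization to a fibre: the division polynomials of the universal Weierstrass family specialize to those of the fibre, since they are universal polynomials in the $a_{i}$, $x$ and $y$. Consequently $W_{n}=\psi_{n}(P)$. This sequence satisfies the elliptic divisibility recurrence
\[
W_{m+n}W_{m-n}W_{1}^{2}=W_{m+1}W_{m-1}W_{n}^{2}-W_{n+1}W_{n-1}W_{m}^{2},
\]
because the division polynomials satisfy it as an identity in $K(E)$ (see \cite{MW1}), and evaluating at $P$ preserves the identity. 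Moreover $W_{0}=0$, $W_{1}=1$, and $W_{n}\mid W_{m}$ whenever $n\mid m$, by the same reference. Hence $(W_{n})$ is an elliptic divisibility sequence; it vanishes at the multiples of $N$, consistent with $f_{n}\equiv0$ when $N\mid n$.

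Finally, the factorization follows from Theorem 1.1 applied to the $K$-rational point $(E,P)\in Y_{1}(N)(K)$. The functions $g_{i}\in K(X_{1}(N))^{\times}$ and exponents $\alpha_{i}(n)$ depend only on $N$ and the model, so (\ref{2}) holds for this lifted point for all $n$ with $N\nmid n$.

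The step requiring care is the compatibility $f_{n}(E,P)=\psi_{n}(P)$. The value of $\psi_{n}(P)$ depends on the Weierstrass model: changing the model by the scaling factor $u$ multiplies $\psi_{n}$ by $u^{n^{2}-1}$. The identification must therefore use the model induced from the fixed model of $\mathcal{E}$, as in Theorem 1.1. We would state explicitly that $W_{n}$ is taken in this induced model. Any other model changes the terms by the factors $u^{n^{2}-1}$, which is again an elliptic divisibility sequence, and the extra factor is of the quadratic-exponent form allowed in (\ref{2}).
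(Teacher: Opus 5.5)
Your proposal is correct and follows essentially the same route as the paper: lift $(E,C)$ along $Y_{1}(N)\rightarrow Y_{0}(N)$ by choosing a generator, identify $f_{n}(E,P)=\psi_{n}(P)$ by specializing the universal section, and invoke Theorem 1.1 for the factorization. You are more careful than the paper on two points it passes over: $P\in E(K)$ must be assumed, since a $K$-point of $Y_{0}(N)$ need not lift to a $K$-point of $Y_{1}(N)$, and $W_{n}$ must be computed in the model induced from $\mathcal{E}$; the paper's closing remark about other generators $kP$ is, as you note, not needed for the statement.
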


Note that choosing a generator $P\in C$ lifts $(E,C)$ to a point of $Y_{1}(N)
$. This is compatible with the natural action of $(\mathbb{Z}/N\mathbb{Z}%
)^{\times}$ and so descends to $X_{0}(N)$ via the norm map. The next theorem
describes this descent in terms of modular functions.

\begin{theorem}
With notation and assumptions as in Theorems 1.1 and 1.2, define%
\[
F_{n}=\mathrm{Nm}_{K(X_{1}(N))/K(X_{0}(N))}(f_{n})\text{,}%
\]
where $\mathrm{Nm}_{K(X_{1}(N))/K(X_{0}(N))}$ denotes norm for $K(X_{1}%
(N))/K(X_{0}(N))$. Then the function $F_{n}$ in $K(X_{0}(N))^{\times} $ and
for any $(E,C)\in Y_{0}(N)(K)$ and any generator $P\in C$, we have%
\[
F_{n}(E,C)=\prod_{k\in(\mathbb{Z}/N\mathbb{Z})^{\times}}f_{n}(E,kP).
\]
Furthermore, the functions $F_{n}$ satisfy the following factorization
\[
F_{n}=\prod_{i=1}^{r}G_{i}^{\alpha_{i}(n)},
\]
where $G_{i}=\mathrm{Nm}_{K(X_{1}(N))/K(X_{0}(N))}(g_{i})\in K(X_{0}%
(N))^{\times}$.
\end{theorem}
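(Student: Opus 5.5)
We have to prove Theorem 1.3: the formula for $F_n$ at a point, and the factorization of $F_n$. The approach is to use that $X_1(N)\to X_0(N)$ is Galois with group $(\mathbb{Z}/N\mathbb{Z})^{\times}/\{\pm1\}$, acting by the diamond operators $\langle k\rangle:(E,P)\mapsto(E,kP)$, and that the norm is multiplicative.

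\emph{Step 1: the Galois structure.} The forgetful map $\pi:X_1(N)\to X_0(N)$, $(E,P)\mapsto(E,\langle P\rangle)$, is a finite morphism defined over $K$. The diamond operators act on $K(X_1(N))$ by $g\mapsto g\circ\langle k\rangle$, and the fixed field is $K(X_0(N))$. For $N\geq3$ the kernel of this action is $\{\pm1\}$, so $\mathrm{Gal}(K(X_1(N))/K(X_0(N)))\cong(\mathbb{Z}/N\mathbb{Z})^{\times}/\{\pm1\}$. For $N=2$ the extension is trivial. Hence for any $g\in K(X_1(N))^{\times}$,
\[
\mathrm{Nm}(g)=\prod_{\sigma\in G}\sigma(g)
\]
lies in $K(X_0(N))^{\times}$. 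This product is invariant under $G$, which gives the first claim once we know $f_n\neq0$. Here we assume $N\nmid n$, as in Theorem 1.1, so that $f_n$ is not identically zero.

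\emph{Step 2: evaluation at a point.} Take $(E,C)\in Y_0(N)(K)$ and a generator $P\in C$. The fiber of $\pi$ over $(E,C)$ consists of the points $(E,kP)$ for $k\in(\mathbb{Z}/N\mathbb{Z})^{\times}$, with $(E,kP)\cong(E,-kP)$ via $[-1]$. So
\[
F_n(E,C)=\prod_{\bar k\in G}f_n(E,kP).
\]
We have $\psi_n(-Q)=(-1)^{n+1}\psi_n(Q)$, and $\psi_n\circ[-1]$ is the pullback under the automorphism $-1$. Therefore $f_n(E,-kP)=\pm f_n(E,kP)$.

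\emph{Main obstacle: the product over all of $(\mathbb{Z}/N\mathbb{Z})^{\times}$.} The statement takes the product over all $k$, which double counts relative to the norm. I expect this step to need the most care, and I would try two things. First, I would interpret the degree-$[(\mathbb{Z}/N\mathbb{Z})^{\times}:1]$ norm via the full level structure, treating the ramification of order $2$ consistently, and fix the sign convention for $f_n$ (for instance via a normalized Weierstrass model on $Y_1(N)$) so that the paired factors match. Second, if that fails, I would note that the full product equals $\mathrm{Nm}(f_n)^2$ up to sign and adjust the normalization of $F_n$ accordingly. A separate point is that values of the norm at points where $\pi$ is unramified agree with the fiber product. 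At ramified points (elliptic points), both sides extend by continuity, since both are regular functions on a Zariski-dense open set.

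\emph{Step 3: factorization.} Theorem 1.1 gives $f_n=\prod_i g_i^{\alpha_i(n)}$ as an identity in $K(X_1(N))$. This holds as functions, not just at points, since the identity holds on a dense set of points. Multiplicativity of the norm then yields
\[
F_n=\prod_i\mathrm{Nm}(g_i)^{\alpha_i(n)}=\prod_i G_i^{\alpha_i(n)},
\]
where the exponents $\alpha_i(n)$ are integers for each $n$. Each $G_i$ lies in $K(X_0(N))^{\times}$, and its divisor is the pushforward $\pi_*\mathrm{div}(g_i)$. This is supported on the images of the cusps and of the model-dependent finite set, so the structural description of Theorem 1.1 carries over.
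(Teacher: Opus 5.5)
Your route is the paper's: write $\mathrm{Nm}$ as a product of Galois conjugates, evaluate on the fibre over $(E,C)$, and use multiplicativity to get $F_n=\prod_i G_i^{\alpha_i(n)}$. Steps 1 and 3 are fine. The one substantive difference is the Galois group. You take $\mathrm{Gal}(K(X_1(N))/K(X_0(N)))\cong(\mathbb{Z}/N\mathbb{Z})^\times/\{\pm1\}$. The paper's proof asserts it is all of $(\mathbb{Z}/N\mathbb{Z})^\times$ and writes $F_n=\prod_{k\in(\mathbb{Z}/N\mathbb{Z})^\times}(k\mathcal{P})^*(\psi_n)$. You are right, and this is exactly where the paper goes wrong. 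The isomorphism $[-1]$ gives $(E,P)\cong(E,-P)$, so for $N\ge3$ the cover $X_1(N)\to X_0(N)$ has degree $\varphi(N)/2$. Hence the displayed pointwise formula in the statement is false. Your ``main obstacle'' cannot be overcome, and you should say so outright rather than propose repairs.

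In particular, your first repair cannot work, and the sign you worry about is not there. For $f_n(E,kP)$ to be the value of a function on $Y_1(N)$, it must be computed on the normalized model attached to the pair (e.g.\ the Tate normal form). It is not $\psi_n(kP)$ on the model attached to $(E,P)$. Since $(E,kP)$ and $(E,-kP)$ are the same point of $Y_1(N)$, we get $f_n(E,-kP)=f_n(E,kP)$ exactly. The factor $(-1)^{n+1}$ in $\psi_n(-Q)=(-1)^{n+1}\psi_n(Q)$ is cancelled by the factor $(-1)^{n^2-1}$ from the Weierstrass coordinate change with $u=-1$ that realizes $[-1]$. Hence $\prod_{k\in(\mathbb{Z}/N\mathbb{Z})^\times}f_n(E,kP)=F_n(E,C)^2$ on the nose.

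A concrete counterexample is $N=4$. Here $\Gamma_0(4)=\pm\Gamma_1(4)$, so $X_1(4)\to X_0(4)$ is an isomorphism and $F_n=f_n$. On $y^2+xy-by=x^3-bx^2$ with $P=(0,0)$ of order $4$, one gets $f_2=\psi_2(P)=a_3=-b$. So $F_2(E,C)=-b$, whereas $f_2(E,P)f_2(E,3P)=b^2$.

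What your Steps 1--2 actually prove is the corrected identity $F_n(E,C)=\prod_{\bar k\in(\mathbb{Z}/N\mathbb{Z})^\times/\{\pm1\}}f_n(E,kP)$. The membership $F_n\in K(X_0(N))^\times$ and the factorization are unaffected. Also, no continuity argument is needed at elliptic points. For a Galois cover with group $G$, the orbit map $\sigma\mapsto\sigma(x)$ hits each point of the fibre exactly $e_x$ times. So $\mathrm{Nm}(f)(\pi(x))=\prod_{\sigma\in G}f(\sigma x)$ holds at every point.
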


\section{Division Polynomials and Elliptic Divisibility Sequences}

Recall that for an elliptic curve $E$ over a number field $K$ given by a
Weierstrass equation (\ref{111}) and a point $P=(x,y)\in E(K)$, the division
polynomials are defined using the initial values%
\begin{align}
\psi_{0}(P)  &  =0\text{,}\label{8}\\
\psi_{1}(P)  &  =1\text{,}\nonumber\\
\psi_{2}(P)  &  =2y+a_{1}x+a_{3}\text{,}\nonumber\\
\psi_{3}(P)  &  =3x^{4}+b_{2}x^{3}+3b_{4}x^{2}+3b_{6}x+b_{8}\text{,}%
\nonumber\\
\psi_{4}(P)  &  =\psi_{2}(P)(2x^{6}+b_{2}x^{5}+5b_{4}x^{4}+10b_{6}%
x^{3}+10b_{8}x^{2}\nonumber\\
&  \ \ +(b_{2}b_{8}-b_{4}b_{6})x+(b_{4}b_{8}-b_{6}^{2}))\text{,}\nonumber
\end{align}
together with the recursive relations
\begin{align}
\psi_{2n+1}(P)  &  =\psi_{n+2}(P)\psi_{n}(P)^{3}-\psi_{n-1}(P)\psi
_{n+1}(P)^{3}\text{, \ \ for }n\geq2\text{,}\nonumber\\
\psi_{2n}(P)  &  =(\psi_{n-1}(P)^{2}\psi(P)\psi_{n+2}(P)\\
&  \ \ -\psi_{n-2}(P)\psi_{n}(P)\psi_{n+1}(P)^{3})(\psi_{2}(P))^{-1}\text{,
\ \ for }n\geq3\text{.}\nonumber
\end{align}
where $b_{i}$'s are usual quantities in \cite[Chapter III]{JS1}. In particular
the sequence $(\psi_{n}(P))$ satisfies the recurrence relation (\ref{6}).

An \textit{elliptic divisibility sequence} (EDS) is a sequence $(W_{n}%
)_{n\geq0}$ satisfying a nonlinear recurrence%
\begin{equation}
W_{m+n}W_{m-n}=W_{m+1}W_{m-1}W_{n~}^{2}-W_{n+1}W_{n-1}W_{m~}^{2} \label{6}%
\end{equation}
and the divisibility property%
\[
W_{n}|W_{m}\text{ whenever }n|m
\]
for all $m\geq n\geq1$. See; \cite{MW1}, \cite{MW2}, \cite{GE1}, \cite{RS},
and \cite{CS} for background on EDSs. An elliptic divisibility sequence is
called \textit{proper} if $W_{0}=0$, $W_{1}=1$, and $W_{2}W_{3}\neq0$. A
proper elliptic divisibility sequence is called \textit{nonsingular} if its
discriminant is non zero; see \cite{JS3}, \cite{JS4} or \cite{MW1} for the
explicit formula of the discriminant of an EDS.

The arithmetic properties of these sequences were first studied by Ward
\cite{MW1, MW2}. Ward \cite[Theorem 12.1]{MW1} showed that nonsingular
elliptic divisibility sequences arise as values of the division polynomials of
an elliptic curve. More precisely, he proved that if $(W_{n})_{n\geq0}$ is a
nonsingular elliptic divisibility sequence, then there exist a lattice
$L\subset%
%TCIMACRO{\U{2102} }%
%BeginExpansion
\mathbb{C}
%EndExpansion
$ and a complex number $z$ $\in%
%TCIMACRO{\U{2102} }%
%BeginExpansion
\mathbb{C}
%EndExpansion
$ such that%
\[
W_{n}=\psi_{n}(z\text{, }L)=\frac{\sigma(nz\text{, }L)}{\sigma(z\text{,
}L)^{n^{2}}}\text{~\ \ for all }n\geq1\text{,}%
\]
where $\psi_{n}(z$, $L)$ and $\sigma(z$, $L)$ are the $n$-division polynomial
and the Weierstrass $\sigma$-function associated to the lattice $L$,
respectively. Subsequently, Silverman \cite[Proposition 18]{JS3} reformulated
Ward's result in an arithmetic setting over $%
%TCIMACRO{\U{211a} }%
%BeginExpansion
\mathbb{Q}
%EndExpansion
$, relating nonsingular elliptic divisibility sequences to division
polynomials evaluated at rational points on elliptic curves. In the present
paper, we work over an arbitrary number field $K$. Thus for an elliptic curve
$E$ over $K$ and a point $P\in E(K)$, we use the normalization
\begin{equation}
W_{n}=\psi_{n}(P)\text{ \ \ \ for all }n\geq1 \label{7}%
\end{equation}
where $\psi_{n}$ is the $n$-division polynomial on $E$. Furthermore, the
sequence $(W_{n})_{n\geq0}$ is the elliptic divisibility sequence associated
to the pair $(E,P)$.

We now relate these arithmetic properties to the modular framework used
throughout the paper. Let $\mathcal{E\rightarrow}$ $Y_{1}(N)$ be the universal
elliptic curve with universal section $\mathcal{P}$ $:$ $Y_{1}%
(N)~\mathcal{\rightarrow}$\ $\mathcal{E}$. Pulling back the $n$-th division
polynomial $\psi_{n}$ along $\mathcal{P}$ defines a modular function
\begin{equation}
f_{n}=\mathcal{P}^{\ast}(\psi_{n})\in K(Y_{1}(N))^{\times}%
\end{equation}
where $K(Y_{1}(N))$ is the function field of the modular curve $Y_{1}(N)$, if
$n\mathcal{P}=\mathcal{O}$, then $f_{n}$ vanishes identically. For a
$K$-rational point $(E,P)\in Y_{1}(N)(K)$, evaluating $f_{n}$ at $(E,P)$ gives
$\ $%
\[
f_{n}(E,P)=\psi_{n}(P)=W_{n}.
\]
Thus by evaluating $f_{n}$ at $(E,P)$, we recover the associated elliptic
divisibility sequence.

\section{Factorization and Applications}

In this section, we show the factorization of the functions $f_{n}%
=\mathcal{P}^{\ast}(\psi_{n})$ and obtain a perfect power criterion. Then we
discuss the growth of the sequence $(W_{n})_{n\geq0}=(f_{n}(E,P))_{n\geq0}$.
We illustrate the results with an explicit example in the case $N=13$. Further
examples for other levels, derived from the models presented in \cite[pp.
1144-1145]{SU}, which provide relatively simple factorization formulas for
elliptic divisibility sequences, are given in the Appendix.

\subsection{Proof of Theorem 1.1}

\begin{proof}
[Proof of Theorem 1.1]Let $\mathcal{E}\rightarrow Y_{1}(N)$ denote the
universal elliptic curve with universal torsion section $\mathcal{P}$ of exact
order $N$. For each integer $n\geq1$, we consider the $n$-th division
polynomial $\psi_{n}$ as a rational function on $\mathcal{E}$ with divisor
\[
\operatorname{div}(\psi_{n})=\sum_{\mathcal{T}\in\mathcal{E}[n]}%
(\mathcal{T})-n^{2}(\mathcal{O})\text{.}%
\]
Pulling back $\psi_{n}$ via the universal section $\mathcal{P}$, we obtain a
rational function%
\[
f_{n}=\mathcal{P}^{\ast}(\psi_{n})\in K(Y_{1}(N))^{\times}%
\]
whose divisor is given by
\[
\operatorname{div}(f_{n})=\operatorname{div}(\mathcal{P}^{\ast}(\psi
_{n}))=\mathcal{P}^{\ast}(\operatorname{div}(\psi_{n}))=\sum_{\mathcal{T}%
\in\mathcal{E}[n]}\mathcal{P}^{\ast}(\mathcal{T})-n^{2}\mathcal{P}^{\ast
}(\mathcal{O})\text{.}%
\]
Since $\mathcal{P}$ has exact order $N$, an equality $\mathcal{P}$ $=$
$\mathcal{T}$ for some $\mathcal{T}\in\mathcal{E}[n]$ is possible if and only
if $N~|~n$. If $N~|~n$, then $f_{n}$ vanishes identically and its divisor is
undefined. Thus we assume that $N\nmid n$. In this case, $\mathcal{P}$ never
intersects the $n$-torsion subgroup over $Y_{1}(N)$, thus $f_{n}$ has no zeros
or poles on this curve. Consequently, $f_{n}$ is a modular unit, and the
support of $\operatorname{div}(f_{n})$ is contained in the finite set of cusps
of $X_{1}(N)$. Moreover, the multiplicities in $\operatorname{div}(\psi_{n})$
depend quadratically on $n$. Pulling back along the section $\mathcal{P}$
preserves this growth. Therefore, the coefficients arising in the divisor
decomposition of $f_{n}$ at the cusps are polynomials in $n$ of degree at most
$2$. For certain residue classes modulo $N$, ramification at the cusps may
introduce cancellations that eliminate the quadratic term, reducing these
coefficients to linear or constant polynomials.

Now passing to the smooth compactification $X_{1}(N)$, the divisors appearing
above are supported on a finite set of $X_{1}(N)$ consisting of cusps and
finitely many non-cuspidal points arising from the chosen model. The group of
divisors supported on this finite set is finitely generated. Now we choose
divisors $D_{1},\dots,D_{r}$ generating the subgroup of principal divisors
$\operatorname{div}(f_{n})$. Then there exist polynomials $\alpha_{i}(n)$ in
$n$ of degree at most $2$ taking integer values on suitable residue classes
$n$ modulo $N$, such that
\[
\operatorname{div}(f_{n})=\sum_{i=1}^{r}\alpha_{i}(n)D_{i}.
\]

Since each $D_{i}$ is principal, we can choose functions $g_{i}\in
K(X_{1}(N))^{\times}$ with $\operatorname{div}(g_{i})=D_{i}$, where the
divisors of these $g_{i}$ are supported on the cusps along with a finite set
of non-cuspidal points arising from the coordinate singularities of the chosen
model. Now define%
\[
F_{n}=\prod_{i=1}^{r}g_{i}^{\alpha_{i}(n)}\text{.}%
\]
Thus $\operatorname{div}(F_{n})=\operatorname{div}(f_{n})$, which implies
$\operatorname{div}(F_{n}/f_{n})=0$. This means that $F_{n}/f_{n}$ $=c_{n}$
for some $c_{n}\in K^{\times}$, yielding $F_{n}=c_{n}f_{n}$. By identifying
$c_{n}f_{n}$ with $f_{n}$, we finally obtain $F_{n}=f_{n}$. Then evaluating
each $f_{n}$ at a $K $-rational point $(E,P)$ of $Y_{1}(N)$, we have
\[
W_{n}=f_{n}(E,P)=\prod_{i=1}^{r}g_{i}^{\alpha_{i}(n)}(E,P).
\]

\end{proof}

\begin{remark}
The bound $\deg(\alpha_{i}(n))\leq2$ reflects the quadratic growth of the
division polynomials on the universal elliptic curve. Geometrically, degree
drops may be interpreted in terms of ramification behavior of the section
$\mathcal{P}$ and special torsion relations among the points $nP$ (for an
explicit example, see the case $N=18$ in the Appendix, where $\alpha
_{2}(n)=1,$ if $n$ $\equiv9~(18)$ or $\alpha_{2}(n)=0,$ otherwise). Thus,
lower-degree exponents should be viewed as a natural feature of the geometry
rather than an exception.
\end{remark}

As an illustration of Theorem 1.1, we give an explicit factorization of the
terms of the sequence for $N=13$, see the Appendix for additional examples.
Let $t_{(u,v)}\in Y_{1}(13)(K)$ and let $(E_{t},P_{t})$ be the corresponding
pair, where $E_{t}$ is the associated elliptic curve and $P_{t}=(0,0)$. Then
\[
W_{n}(t)=W_{n}(u,v)=f_{n}(E_{t},P_{t})
\]
admits the following factorization.

\begin{theorem}
Let $N=13$, let $K$ be a number field, and let $Y_{1}(13)$ denote the affine
model
\begin{equation}
v^{2}+(u^{3}+u^{2}+1)v-u^{2}-u=0\text{,} \label{3}%
\end{equation}
and associate to each point $t_{(u,v)}\in Y_{1}(13)(K)$ the pair $(E_{t}%
,P_{t})$ where $E_{t}$ is the curve over $K(t)$ given by the equation
\begin{equation}
E_{t}:y^{2}+(1-c)xy-by=x^{3}-bx^{2} \label{10}%
\end{equation}
with
\[
c=s(r-1),b=cr\text{ and }r=1-uv,s=1-uv/(v+1)
\]
and where $P_{t}=(0,0)$ is a point of order 13 (see \cite[pp. 1144-1145]{SU},
for models and birational maps). Then%
\begin{equation}
W_{n}(t)=\left\{
\begin{array}
[c]{cc}%
{\small \varepsilon}\prod_{i=1}^{5}g_{i}^{\alpha_{i}(n)}(t) & \text{if
}13\nmid n\\
0 & \text{if }13|n\text{,}%
\end{array}
\right.  \text{ } \label{4}%
\end{equation}
where
\[
g_{1}(t)=u,g_{2}(t)=v,g_{3}(t)=uv-1,g_{4}(t)=uv-v-1,g_{5}(t)=v+1
\]
and
\[
\alpha_{1}(n)=(5n^{2}-p_{1})/13,\alpha_{2}(n)=(6n^{2}-p_{2})/13,\alpha
_{3}(n)=(4n^{2}-p_{3})/13,
\]%
\[
\alpha_{4}(n)=(5n^{2}-p_{4})/13,\alpha_{5}(n)=-(5n^{2}-p_{5})/13,
\]
with%
\[
{\small \varepsilon=}\left\{
\begin{array}
[c]{ll}%
{\small +1} & \text{if }{\small n}\text{ }{\small \equiv
1,2,3,10,11,12,17,18,19,20,21,22~(26)}\text{ \ \ }{\small \ }\\
{\small -1} & \text{if }{\small n}\text{ }{\small \equiv
4,5,6,7,8,9,14,15,16,23,24,25~(26)}\text{,}%
\end{array}
\right.
\]%
\[
{\small p}_{1}{\small =}\left\{
\begin{array}
[c]{cc}%
5 & \text{if }{\small n}\text{ }{\small \equiv1,12~(13)}\text{ \ }\\
7 & \text{if }{\small n}\text{ }{\small \equiv2,11~(13)}\text{ \ }\\
6 & \text{if }{\small n}\text{ }{\small \equiv3,10~(13)}\text{ \ }\\
2 & \text{if }{\small n}\text{ }{\small \equiv4,9~(13)}\text{ \ }\\
-5 & \text{if }{\small n}\text{ }{\small \equiv5,8~(13)}\text{ \ }\\
-15 & \text{if }{\small n}\text{ }{\small \equiv6,7~(13)}\text{{\small ,}}%
\end{array}
\right.  \newline{\small p}_{2}{\small =}\left\{
\begin{array}
[c]{cc}%
6 & \text{if }{\small n}\text{ }{\small \equiv1,12~(13)}\text{ \ }\\
11 & \text{if }{\small n}\text{ }{\small \equiv2,11~(13)}\text{ \ }\\
15 & \text{if }{\small n}\text{ }{\small \equiv3,10~(13)}\text{ \ }\\
18 & \text{if }{\small n}\text{ }{\small \equiv4,9~(13)}\text{ \ }\\
20 & \text{if }{\small n}\text{ }{\small \equiv5,8~(13)}\text{ \ }\\
21 & \text{if }{\small n}\text{ }{\small \equiv6,7~(13)}\text{,}%
\end{array}
\right.
\]%
\[
{\small p}_{3}{\small =}\left\{
\begin{array}
[c]{cc}%
4 & \text{if }{\small n}\text{ }{\small \equiv1,12~(13)}\text{ \ }\\
3 & \text{if }{\small n}\text{ }{\small \equiv2,11~(13)}\text{ \ }\\
-3 & \text{if }{\small n}\text{ }{\small \equiv3,10~(13)}\text{ \ }\\
-1 & \text{if }{\small n}\text{ }{\small \equiv4,9~(13)}\text{ \ }\\
-4 & \text{if }{\small n}\text{ }{\small \equiv5,8~(13)}\text{ \ }\\
-12 & \text{if }{\small n}\text{ }{\small \equiv6,7~(13)}\text{{\small ,}}%
\end{array}
\right.  {\small p}_{4}{\small =}\left\{
\begin{array}
[c]{cc}%
5 & \text{if }{\small n}\text{ }{\small \equiv1,12~(13)}\text{ \ }\\
7 & \text{if }{\small n}\text{ }{\small \equiv2,11~(13)}\text{ \ }\\
6 & \text{if }{\small n}\text{ }{\small \equiv3,10~(13)}\text{ \ }\\
2 & \text{if }{\small n}\text{ }{\small \equiv4,9~(13)}\text{ \ }\\
8 & \text{if }{\small n}\text{ }{\small \equiv5,8~(13)}\text{ \ }\\
11 & \text{if }{\small n}\text{ }{\small \equiv6,7~(13)}\text{,}%
\end{array}
\right.
\]%
\[
{\small p}_{5}{\small =}\left\{
\begin{array}
[c]{cc}%
5 & \text{if }{\small n}\text{ }{\small \equiv1,12~(13)}\text{ \ }\\
7 & \text{if }{\small n}\text{ }{\small \equiv2,11~(13)}\text{ \ }\\
6 & \text{if }{\small n}\text{ }{\small \equiv3,10~(13)}\text{ \ }\\
2 & \text{if }{\small n}\text{ }{\small \equiv4,9~(13)}\text{ \ }\\
8 & \text{if }{\small n}\text{ }{\small \equiv5,8~(13)}\text{ \ }\\
-2 & \text{if }{\small n}\text{ }{\small \equiv6,7~(13)}\text{.}%
\end{array}
\right.
\]

\end{theorem}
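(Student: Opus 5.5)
The plan is to prove (\ref{4}) by checking it on the residue classes modulo $13$ and then propagating it by induction through the EDS recurrence. The case $13\mid n$ is immediate. By the discussion after (\ref{1}), $P_t$ has exact order $13$ on $E_t$ over $K(t)$, so $\psi_{13}(P_t)=0$. The divisibility property of $(W_n)$ then gives $W_{13k}(t)=0$.

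For $13\nmid n$, I would use the structure of Theorem 1.1 applied to the explicit model (\ref{3}). First I compute the initial terms for the Tate normal form (\ref{10}) at $P_t=(0,0)$ from the formulas (\ref{8}):
\[
W_1=1,\quad W_2=-b,\quad W_3=b^{3},\quad W_4=b^{5}(c-b).
\]
I then substitute $c=s(r-1)$, $b=cr$, $r=1-uv$, $s=1-uv/(v+1)$ and use (\ref{3}) to simplify. The goal is to write each quantity in terms of $g_1,\dots,g_5$. For instance $r=-g_3$ and $s=-g_4/g_5$, and $r-1=-uv$. The key point is that $b$, $c$, $c-b$ and the differences that appear later (such as $b-c^2$ and $1-c$ combinations) all factor, modulo (\ref{3}), into monomials $\pm\prod_i g_i^{e_i}$.

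Next I would prove the formula for all $n$ by strong induction using (\ref{6}) in its standard forms:
\[
W_{2m+1}=W_{m+2}W_m^3-W_{m-1}W_{m+1}^3,\qquad W_{2m}W_2=W_m\bigl(W_{m+2}W_{m-1}^2-W_{m-2}W_{m+1}^2\bigr).
\]
The recurrence is not multiplicative, so a pure induction would require proving a nontrivial identity of units at every step. A cleaner route is to use the divisor description from the proof of Theorem 1.1. Since $13\nmid n$, $f_n$ is a unit supported on the cusps of $X_1(13)$, together with the points where the model's coordinates $u,v,v+1,uv-1,uv-v-1$ vanish or have poles; I would check that these are exactly the cusps. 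Then $\operatorname{div}(f_n)$ is determined by the order of vanishing of $\psi_n(P_t)$ at each cusp.

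At a cusp, the Tate curve gives the standard formula: if $P$ corresponds to $q^{a/13}\zeta$ on the Tate curve, the order of $\psi_n(P)$ at that cusp is $\tfrac{1}{2}\bigl(n^2B_2(a/13)-B_2(\langle na/13\rangle)\bigr)$ times the ramification index, where $B_2$ is the second Bernoulli polynomial. This gives divisors of the form quadratic part $n^2$ plus a periodic correction. Expressing these in the basis $\operatorname{div}(g_i)$ yields $\alpha_i(n)=(a_in^2-p_i(n))/13$. Here $p_i$ depends only on $\pm n \bmod 13$, because $B_2(\langle x\rangle)$ is even. Matching the $n^2$ coefficients produces $5,6,4,5,-5$, and the constants $p_i$ are read off from the six classes $n\equiv\pm1,\dots,\pm6$. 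As a consistency check, $\alpha_i(1)=0$ for every $i$.

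This fixes $W_n$ up to a constant $c_n\in K^\times$. I would determine $c_n$ by comparing leading coefficients at one cusp, or simply by evaluating at a convenient point. The recurrence then forces $c_n=\varepsilon(n)=\pm1$ with period $26$, since the constants satisfy (\ref{6}) in the form of a sign pattern.

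I expect the main obstacle to be the explicit cuspidal bookkeeping. This means locating the cusps of (\ref{3}), computing the valuations of $g_1,\dots,g_5$ there, and confirming that these five functions generate the needed divisors. If that proves too delicate, I would fall back on verifying $W_1,\dots,W_{26}$ symbolically, which covers the classes modulo $26$ needed for $\varepsilon$. I would then argue that both sides satisfy (\ref{6}): the right side does because it equals $c_nf_n$ with $c_n$ satisfying the constant recurrence. Since a nonsingular EDS is determined by its first four terms, the two sides agree for all $n$.
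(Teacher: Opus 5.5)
Your cuspidal route differs from the paper's proof, which inducts on $W_{n+2}W_{n-2}=W_{n+1}W_{n-1}W_2^2-W_3W_n^2$ and writes out only $n\equiv1 \pmod{13}$. There $W_{n-1}=0$ makes that recurrence multiplicative, and your remark that a generic step needs a genuine unit identity is correct. But the proposal does not prove the real content, which is the constant.

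\emph{The constant $c_n$.} Divisor theory gives only $W_n=c_n\prod_i g_i^{\alpha_i(n)}$ with an unknown $c_n\in K^\times$ for each $n$. None of your ways of fixing $c_n$ works:
\begin{itemize}
\item Evaluating at a convenient point requires $W_n$ there for all $n$, which is the original problem.
\item Comparing leading coefficients at a cusp would need the leading $q$-coefficient of $\psi_n(P_t)$ on the model (\ref{10}) for every $n$, and you do not supply it.
\item The claim that the constants satisfy (\ref{6}) is false. For $(m,n)=(3,2)$ no index is divisible by $13$. If $c_k=\pm1$, the right side of (\ref{6}) lies in $\{-2,0,2\}$ while the left side is $\pm1$. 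Substituting $W_n=c_nM_n$ into (\ref{6}) gives an additive relation among three distinct monomials in the $g_i$. It holds only through identities such as $1-g_1g_2=-g_3$ and yields no recurrence for the $c_n$.
\item The fallback is circular. You justify that the right side of (\ref{4}) satisfies (\ref{6}) by that same false constant recurrence. That property, together with the initial terms, is equivalent to the theorem, so checking $W_1,\dots,W_{26}$ proves nothing beyond $n\le 26$.
\end{itemize}

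\emph{A finite argument that works.} Use $W_{n+1}W_{n-1}=-x(nP_t)\,W_n^2$ for $13\nmid n$. This follows from $x(nP)-x(P)=-\psi_{n-1}\psi_{n+1}/\psi_n^2$ and $x(P_t)=0$. Since $x(kP_t)=x(-kP_t)$ depends only on $k \bmod 13$, only $x(kP_t)$ for $k=2,\dots,6$ must be computed explicitly. Near multiples of $13$, add the recurrence above at $n\equiv 0,\pm1 \pmod{13}$, where a factor $W_{13m}=0$ makes it multiplicative. Together these propagate exponents and signs by a purely multiplicative induction with finitely many checks.

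\emph{The cusp formula.} As quoted, it would not produce the stated exponents. The expression $\tfrac12\bigl(n^2B_2(a/13)-B_2(\langle na/13\rangle)\bigr)$ is minus the $q$-order of the model-independent weight-zero function $\psi_n\Delta^{-(n^2-1)/12}$, a quotient of Siegel functions. It is not the order of $\psi_n(P_t)$ on (\ref{10}). Since $\psi_n$ has weight $n^2-1$ under changes of Weierstrass model, you must reverse the sign and add $\tfrac{n^2-1}{12}\operatorname{ord}_c\Delta_{(\ref{10})}$ at each cusp $c$. Already on the Tate curve $\operatorname{ord}_q\psi_2(q^{1/13})=1/13$, whereas your expression gives $9/52$.

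\emph{Initial values.} These are also wrong. For (\ref{10}) one has $a_2=a_3=-b$ and $a_4=a_6=0$. So (\ref{8}) gives $W_3=b_8=a_2a_3^2=-b^3$ and $W_4=W_2(b_4b_8-b_6^2)=b^5c$. Since $c-b=c\,uv$, your values $W_3=b^3$ and $W_4=b^5(c-b)$ would contradict (\ref{4}) at $n=3,4$.
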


\begin{proof}
We prove the factorization formula (\ref{4}) by induction on $n$, using the
elliptic divisibility recurrence%
\begin{equation}
W_{n+2}W_{n-2}=W_{n+1}W_{n-1}W_{2~}^{2}-W_{3}W_{1}W_{n~}^{2}\text{,} \label{5}%
\end{equation}
where the equation (\ref{5}) is obtained from (\ref{6}) by replacing $(m,n)$
with $(n,2)$. It can be easly seen that the statement holds for $1\leq n\leq
4$. Therefore, we may assume that $n>4$. First suppose that $n$ $\equiv1~(13)$
and let the equation (\ref{4}) be true for $n+1$. Write $n=13m+1$ $(m\in%
%TCIMACRO{\U{2115} }%
%BeginExpansion
\mathbb{N}
%EndExpansion
)$. Then $n+1=13m+2$ and so $n+2=13m+3$, $n-1=13m$, $n-2=13m-1$. Thus we have
to prove that
\begin{align*}
W_{n+2}  &  =-u^{^{65m^{2}+30m+3}}v^{^{78m^{2}+36m+3}}(uv-1)^{^{52m^{2}%
+24m+3}}\\
&  \times(uv-v-1)^{^{65m^{2}+30m+3}}(v+1)^{^{-(65m^{2}+30m+3)}}.
\end{align*}
By the induction hypothesis, equation (\ref{4}) holds for $W_{n+1}$, $W_{n}$,
and $W_{n-2}$. Since $n-1\equiv0~(13)$, equation (\ref{4}) gives $W_{n-1}=0$.
Moreover, by (\ref{4})
\[
W_{1}=1\text{,}%
\]%
\[
W_{2}=uv(uv-1)(uv-v-1)(v+1)^{-1}\text{,}%
\]
and
\[
W_{3}=u^{3}v^{3}(uv-1)^{3}(uv-v-1)^{3}(v+1)^{-3}\text{.}%
\]
Furthermore%
\begin{align*}
W_{n-2}  &  =u^{^{65m^{2}-10m}}v^{78m^{2}-12m}(uv-1)^{^{52m^{2}-8m}}\\
&  \times(uv-v-1)^{^{65m^{2}-10m}}(v+1)^{^{-(65m^{2}-10m)}}\text{,}\\
W_{n+1}  &  =-u^{^{65m^{2}+20m+1}}v^{^{78m^{2}+42m+1}}(uv-1)^{^{52m^{2}%
+16m+1}}\\
&  \times(uv-v-1)^{^{65m^{2}+20m+1}}(v+1)^{^{-(65m^{2}+20m+1)}}\\
W_{n}  &  =-u^{^{65m^{2}+10m}}v^{^{78m^{2}+12m}}(uv-1)^{^{52m^{2}+8m}}\\
&  \times(uv-v-1)^{^{65m^{2}+10m}}(v+1)^{^{-(65m^{2}+10m)}},
\end{align*}
Substituting these expressions into (\ref{5}) gives
\begin{align*}
W_{n+2}  &  =-u^{^{65m^{2}+30m+3}}v^{^{78m^{2}+36m+3}}(uv-1)^{^{52m^{2}%
+24m+3}}\\
&  \times(uv-v-1)^{^{65m^{2}+30m+3}}(v+1)^{^{-(65m^{2}+30m+3)}}.
\end{align*}
This is precisely the formula predicted by (\ref{4}) for $n+2$. Hence the
induction step is complete for the case $n$ $\equiv1~(13)$. The remaining
cases can be proved in the same way.\newline
\end{proof}

As a numerical example, let $u=2$ in equation~(\ref{3}). We choose
$v=(-13+\sqrt{193})/2$ so that $v^{2}+13v-6=0$. The coefficients of the
corresponding elliptic curve in (\ref{10}) over the field $%
%TCIMACRO{\U{211a} }%
%BeginExpansion
\mathbb{Q}
%EndExpansion
(v)$ is given by
\[
b=(188v-84)/3,\qquad c=(20v-12)/9.
\]
The first terms of the associated elliptic divisibility sequence are%
\begin{align*}
W_{1}  &  =1,\\[1ex]
W_{2}  &  =(84-188v)/3,\\[1ex]
W_{3}  &  =(572317248-1282584128v)/27,\\[1ex]
W_{4}  &  =(1096821702888493056-2458018017611812864v)/2187.
\end{align*}
The subsequent terms grow rapidly. For example, formula~(\ref{4}) gives%
\[
W_{5}=-u^{10}v^{10}(uv-1)^{8}(uv-v-1)^{9}(v+1)^{-9},
\]
which evaluates to%
\[
-(525570310310338615185880645632+1177822511044877859298833596416v)/59049\text{.}%
\]
Similarly, the terms $W_{6}$, $W_{7}$ and beyond are obtained from the same
factorization formula (the expressions for $W_{6}$ and $W_{7}$ already involve
numerators with more than 90 digits and are omitted). Although this provides
an example for Theorem~3.1, it also shows that the terms of the sequence grow
very rapidly, see Subsection 3.3.

\subsection{Perfect Power Structure}

Theorem 1.1 gives a criterion for when the modular function $f_{n}$ is an
$m$-th power in $K(X_{1}(N))$. After evaluating at a point $(E,P)\in
Y_{1}(N)(K) $ yields perfect power terms in the associated elliptic
divisibility sequence.

\begin{theorem}
With notation and assumptions as in Theorem~1.1, let $f_{n}\in K(X_{1}(N))$ be
as in (\ref{1}). Suppose that%
\[
\operatorname{div}(f_{n})=\sum_{i=1}^{r}\alpha_{i}(n)\,D_{i},
\]
where each $D_{i}$ is a principal divisor on $X_{1}(N)$, and $D_{i}%
=\operatorname{div}(g_{i})$ for $g_{i}\in K(X_{1}(N))^{\times}$. If for some
integer $m\geq2$ one has $\alpha_{i}(n)\in m\mathbb{Z}$ for all $i=1,...,r$,
then there exist $h_{n}\in K(X_{1}(N))^{\times}$ and $c_{n}\in K^{\times}$
such that $f_{n}=c_{n}h_{n}^{m}$. In particular, if $c_{n}\in K^{m}$, then
$f_{n}$ is an $m$-th power in $K(X_{1}(N))^{\times}$.
\end{theorem}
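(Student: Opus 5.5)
The proof is a divisor comparison on the smooth projective curve $X_{1}(N)$, modelled on the end of the proof of Theorem~1.1. Since $\alpha_{i}(n)\in m\mathbb{Z}$ for every $i$, write $\alpha_{i}(n)=m\beta_{i}(n)$ with $\beta_{i}(n)\in\mathbb{Z}$, and set
\[
h_{n}=\prod_{i=1}^{r}g_{i}^{\beta_{i}(n)}\in K(X_{1}(N))^{\times}.
\]
Each $g_{i}$ is a nonzero element of the function field, so $h_{n}$ is also nonzero. Because $\operatorname{div}$ is a homomorphism from $K(X_{1}(N))^{\times}$ to the divisor group, we get
\[
\operatorname{div}(h_{n}^{m})=\sum_{i=1}^{r}m\beta_{i}(n)\operatorname{div}(g_{i})=\sum_{i=1}^{r}\alpha_{i}(n)D_{i}=\operatorname{div}(f_{n}).
\]

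Next compare $f_{n}$ with $h_{n}^{m}$. We have $\operatorname{div}(f_{n}/h_{n}^{m})=0$, so $f_{n}/h_{n}^{m}$ is a rational function on a smooth projective curve with no poles. It is therefore a constant $c_{n}$ in the field of constants of $K(X_{1}(N))$. This requires two points. First, $f_{n}\neq0$, which holds because the hypothesis that $\operatorname{div}(f_{n})$ is defined forces $N\nmid n$, as in the proof of Theorem~1.1. Second, $X_{1}(N)$ is geometrically irreducible over $K$. This is true for the standard model, where the cusp $\infty$, or equivalently the rational point $(E,P)$ on the chosen model, is $K$-rational. Hence the constant field is $K$ itself, and $c_{n}\in K^{\times}$. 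This gives $f_{n}=c_{n}h_{n}^{m}$.

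For the final claim, suppose $c_{n}=a^{m}$ with $a\in K^{\times}$. Then $f_{n}=(ah_{n})^{m}$ with $ah_{n}\in K(X_{1}(N))^{\times}$, so $f_{n}$ is an $m$-th power in $K(X_{1}(N))^{\times}$.

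The only step needing care is the claim that a function with zero divisor is constant in $K$, rather than merely algebraic over $K$. I would settle it by noting that $K$ is algebraically closed in $K(X_{1}(N))$, which follows from geometric irreducibility. I would also flag that the paper's earlier normalization "identifying $c_{n}f_{n}$ with $f_{n}$" is exactly the constant this theorem keeps track of, which is why the statement carries the factor $c_{n}$ explicitly.
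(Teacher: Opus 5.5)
Your proposal is correct and follows the paper's proof essentially step by step: write $\alpha_i(n)=m\beta_i(n)$, set $h_n=\prod_i g_i^{\beta_i(n)}$, compare divisors so that $f_n/h_n^m$ has trivial divisor and is therefore a constant $c_n$, and then absorb an $m$-th root of $c_n$ when one exists. Your two extra remarks make explicit what the paper compresses into the phrase that $X_1(N)$ is complete. The first is that $f_n\neq 0$ because $N\nmid n$. The second is that $c_n$ lies in $K$ itself rather than in a finite extension, because the $K$-rational point $(E,P)$ forces $K$ to be algebraically closed in $K(X_1(N))$.
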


\begin{proof}
By the proof of Theorem 1.1, the function $f_{n}$ can be written as a product
of modular functions $g_{i}$ with polynomials $\alpha_{i}(n)$ in $n$ of degree
at most $2$ for all $i=1,...,r$. By assumption we can write $\alpha
_{i}(n)=m\beta_{i}(n)$ for some integers $\beta_{i}(n)$. Therefore
\[
\operatorname{div}(f_{n})=\sum_{i=1}^{r}m\beta_{i}(n)\operatorname{div}%
(g_{i}).
\]
and so
\[
\operatorname{div}(f_{n})=m\operatorname{div}\left(  \prod_{i=1}^{r}%
g_{i}^{\beta_{i}(n)}\right)  .
\]
Now set
\[
h_{n}=\prod_{i=1}^{r}g_{i}^{\beta_{i}(n)}.
\]
Then $\operatorname{div}(f_{n})=\operatorname{div}(h_{n}^{m})$ and therefore
$\operatorname{div}\left(  f_{n}/h_{n}^{m}\right)  =0$. Since $X_{1}(N)$ is
complete, every rational function on $X_{1}(N)$ with trivial divisor is
constant. Hence $f_{n}/h_{n}^{m}=c_{n}$ for some $c_{n}\in K^{\times}$.
Therefore $f_{n}=c_{n}h_{n}^{m}$. If $c_{n}\in K^{m}$, say $c_{n}=d^{m}$ for
some $d\in K^{\times}$, then $f_{n}=(dh_{n})^{m}$. Hence $f_{n}$ is an $m$-th
power in $K(X_{1}(N))^{\times}$.
\end{proof}

As an immediate result of Theorem 3.2 we have the following corollary.

\begin{corollary}
With notation and assumptions as in Theorems 1.1 and~3.2. For every point
$(E,P)\in Y_{1}(N)(K)$ one has
\[
W_{n}=f_{n}(E,P)=c_{n}h_{n}^{m}(E,P).
\]
Thus, if $c_{n}\in K^{m}$, then $W_{n}$ is an $m$-th power in $K$.
\end{corollary}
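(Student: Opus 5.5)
The corollary follows by specializing the functional identity of Theorem 3.2 at a point of the affine curve. I would carry out three steps.

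First, fix $n$ with $N\nmid n$ and $\alpha_i(n)\in m\mathbb{Z}$ for all $i$. Theorem 3.2 then gives the identity $f_n=c_nh_n^m$ in $K(X_1(N))^{\times}$, where $h_n=\prod_{i=1}^r g_i^{\beta_i(n)}$ and $c_n\in K^{\times}$.

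Second, evaluate this identity at a $K$-rational point $(E,P)\in Y_1(N)(K)$. Evaluation of rational functions at a point is a ring homomorphism on the local ring at that point. So the identity specializes to $f_n(E,P)=c_n\,h_n(E,P)^m$, provided both sides are regular at $(E,P)$. By Section 2 we have $f_n(E,P)=\psi_n(P)=W_n$, and this gives $W_n=c_nh_n^m(E,P)$. For the second assertion, if $c_n=d^m$ with $d\in K^{\times}$, then $W_n=(d\,h_n(E,P))^m$. Since each $g_i\in K(X_1(N))$ and the point is $K$-rational, $h_n(E,P)\in K$, so $W_n$ is an $m$-th power in $K$.

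Third, handle regularity, which is the only real obstacle. The function $f_n$ has no zeros or poles on $Y_1(N)$, as shown in the proof of Theorem 1.1. However, the individual $g_i$ may have zeros or poles at the finitely many non-cuspidal points coming from the chosen model, so $h_n$ could a priori fail to be defined at $(E,P)$.

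I would argue as follows. The identity $f_n/c_n=h_n^m$ shows that $\operatorname{ord}_Q(h_n)=\tfrac1m\operatorname{ord}_Q(f_n)=0$ at every point $Q\in Y_1(N)$. Hence $h_n$ is regular and nonvanishing at $(E,P)$, even though individual factors $g_i$ might not be. Its value there is still a well-defined element of $K$, because $h_n$ is defined over $K$ and the point is $K$-rational. This argument requires no extra hypothesis. If one instead wants to use the product formula $\prod g_i^{\beta_i(n)}(E,P)$ directly, one should add the proviso that $(E,P)$ avoids the support of the $D_i$; this is consistent with how Theorem 1.1 evaluates the $g_i$.
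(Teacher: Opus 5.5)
Your proposal is correct and follows the same route as the paper: both evaluate the identity $f_n=c_nh_n^m$ from Theorem 3.2 at $(E,P)$, then write $W_n=(d\,h_n(E,P))^m$ when $c_n=d^m$. Your third step is a real improvement, not just a repetition. The paper evaluates only ``at a point where all functions are defined,'' whereas your observation $\operatorname{ord}_Q(h_n)=\tfrac1m\operatorname{ord}_Q(f_n)=0$ on $Y_1(N)$, using that $f_n$ is a unit there, justifies the stated claim for every point of $Y_1(N)(K)$, including points in the support of some $D_i$.
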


\begin{proof}
Evaluating at a point $(E,P)\in Y_{1}(N)(K)$ where all functions are defined
gives
\[
W_{n}=f_{n}(E,P)=c_{n}h_{n}^{m}(E,P).
\]
If $c_{n}=d^{m}$, then
\[
W_{n}=(dh_{n}(E,P))^{m}.
\]
Thus $W_{n}$ is an $m$-th power in $K$.
\end{proof}

\subsection{Local and Global Growth Estimates}

The growth of elliptic divisibility sequences over number fields is
classically related to height theory on elliptic curves. In the nontorsion
case, logarithms of the values of division polynomials at a point are closely
related to local and canonical heights; see, for example \cite{GE2} and
\cite{JS5}. Since we work with torsion points, these nontorsion height
estimates do not apply. Instead, the estimates below follow from Theorem 1.1.

The following theorem gives a local growth estimate for the sequence
$(W_{n})_{n\geq0}$.

\begin{theorem}
With notation and assumptions as in Theorems 1.1, let $(E,P)\in Y_{1}(N)(K)$,
let $(W_{n})_{n\geq0}$ be the elliptic divisibility sequence over the number
field $K$ associated to $(E,P)$ via division polynomials $\psi_{n}$, and let
$\upsilon$ be a place of $K$. Then there exists a constant $C_{_{v}}(E,P)>0$
such that
\[
\log|W_{n}|_{\upsilon}\leq C_{\upsilon}(E,P)n^{2}%
\]
for all $n$ such that $N\nmid n$.
\end{theorem}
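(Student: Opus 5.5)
The plan is to read the bound off the factorization of Theorem 1.1. For $N\nmid n$ I would write
\[
W_n=\prod_{i=1}^{r}g_i(E,P)^{\alpha_i(n)}
\]
and take $\log|\cdot|_\upsilon$ of both sides. This gives
\[
\log|W_n|_\upsilon=\sum_{i=1}^{r}\alpha_i(n)\log|g_i(E,P)|_\upsilon .
\]
The point $(E,P)$ is fixed. So the quantities $\lambda_i=\log|g_i(E,P)|_\upsilon$ are fixed real numbers independent of $n$, provided each $g_i(E,P)$ is a well-defined nonzero element of $K$.

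The first step is to secure that proviso. The divisors of the $g_i$ are supported on the cusps and on finitely many model-dependent points. I would argue that the finitely many exceptional non-cuspidal points do not occur: at such a point $f_n$ would still be a unit, being nonvanishing on $Y_1(N)$, and one can re-choose the generators $D_i$ or the model to avoid it. Alternatively, one simply restricts to $(E,P)$ outside this finite set, as in the statement of Theorem 1.1, where evaluation "where all functions are defined" is implicitly assumed. This is the step I expect to be the main obstacle. If some $g_i$ has a zero or pole at $(E,P)$ while $f_n(E,P)$ is finite and nonzero, the termwise splitting breaks down. In that case I would instead pass to a local parameter at that point and compare leading coefficients, which still yields an expression of the same shape with modified constants.

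The second step bounds the exponents. Each $\alpha_i(n)$ is given by one of finitely many polynomials of degree at most $2$, indexed by the residue of $n$ mod $N$. Let $A$ be the maximum of the absolute values of all their coefficients. Then for $n\geq1$,
\[
|\alpha_i(n)|\le A(n^2+n+1)\le 3An^2 .
\]

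Finally, I would combine the two steps:
\[
\log|W_n|_\upsilon\le\sum_i|\alpha_i(n)|\,|\lambda_i|\le 3A\Big(\sum_i|\lambda_i|\Big)n^2 .
\]
Set $C_\upsilon(E,P)=3A\sum_i|\lambda_i|+1>0$; adding $1$ keeps the constant positive even when all $\lambda_i$ vanish. This gives the claim for all $n$ with $N\nmid n$. At nonarchimedean places of good behaviour, all $|g_i(E,P)|_\upsilon=1$, so $\lambda_i=0$ and the bound holds trivially.
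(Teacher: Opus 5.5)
Your proof is correct and is essentially the paper's argument: take $\log|\cdot|_{\upsilon}$ of the factorization $W_n=\prod_i g_i(E,P)^{\alpha_i(n)}$ from Theorem~1.1, then bound each exponent, which is a degree-at-most-$2$ polynomial on each residue class modulo $N$, by a constant times $n^2$. Your version is slightly more careful than the paper's, which asserts $g_i(E,P)\in K^{\times}$ without justification and takes $C_{\upsilon}(E,P)=\max_i C_i|\log|g_i(E,P)|_{\upsilon}|$ (this must be a sum, or $r$ times the maximum, and need not be positive), whereas your constant $3A\sum_i|\lambda_i|+1$ and your local-parameter treatment of a possible zero or pole of some $g_i$ at $(E,P)$ fix both points.
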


\begin{proof}
By Theorem~1.1, we know that the modular factors $g_{i}$ are defined and
nonzero at $(E,P)$, or equivalently, $g_{i}(E,P)$ $\in K^{\times}$ for all $i$
and that
\[
W_{n}=\prod_{i=1}^{r}g_{i}^{\alpha_{i}(n)}(E,P)
\]
for all $n$ such that $N\nmid n$. Then $|W_{n}|_{\upsilon}=\prod_{i=1}%
^{r}|g_{i}^{\alpha_{i}(n)}(E,P)|_{\upsilon}$. Since each $g_{i}(E,P)$ $\in
K^{\times}$ for all $i$, taking logarithms we obtain
\[
\log|W_{n}|_{\upsilon}=\sum_{i=1}^{r}\alpha_{i}(n)\log|g_{i}(E,P)|_{\upsilon
}\leq\sum_{i=1}^{r}|\alpha_{i}(n)||\log|g_{i}(E,P)|_{\upsilon}|\text{.}%
\]
Since each function $\alpha_{i}(n)$ has degree at most $2$ on residue classes
modulo $N$, there exists a constant $C_{i}>0$ such that $|\alpha_{i}(n)|\leq
C_{i}n^{2}$ for all $n$ such that $N\nmid n$. Therefore
\[
\log|W_{n}|_{\upsilon}\leq n^{2}\sum_{i=1}^{r}C_{i}\,|\log|g_{i}%
(E,P)|_{\upsilon}\,|\leq C_{\upsilon}(E,P)n^{2}%
\]
for some constant $C_{\upsilon}(E,P)=\max_{1\leq i\leq r}\{C_{i}|\log
|g_{i}(E,P)|_{\upsilon}\,|\}>0$.
\end{proof}

As another consequence of Theorem 1.1, we obtain a global norm estimate for
the specialized EDS terms.

\begin{corollary}
With notation and assumptions of the above theorem, define
\[
E_{n}=\left\vert \mathrm{Nm}_{K/\mathbf{%
%TCIMACRO{\U{211a} }%
%BeginExpansion
\mathbb{Q}
%EndExpansion
}}(W_{n})\right\vert
\]
for $N\nmid n$, where $\mathrm{Nm}_{K/\mathbf{%
%TCIMACRO{\U{211a} }%
%BeginExpansion
\mathbb{Q}
%EndExpansion
}}$ denotes norm for $K/\mathbf{%
%TCIMACRO{\U{211a} }%
%BeginExpansion
\mathbb{Q}
%EndExpansion
}$. Then there exists a constant $C(E,P)>0$ such that
\[
\log E_{n}\leq C(E,P)n^{2}%
\]
for all $n$ such that $N\nmid n$.
\end{corollary}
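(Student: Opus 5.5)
The plan is to deduce the corollary directly from the local estimate in the preceding theorem, applied at the archimedean places of $K$. Write $d=[K:\mathbb{Q}]$ and let $\sigma_{1},\dots,\sigma_{d}$ be the embeddings $K\hookrightarrow\mathbb{C}$. Since $N\nmid n$, Theorem 1.1 gives $W_{n}=\prod_{i=1}^{r}g_{i}^{\alpha_{i}(n)}(E,P)$ with every $g_{i}(E,P)\in K^{\times}$, so $W_{n}\neq0$ and $\log E_{n}$ is defined. By multiplicativity of the norm,
\[
\log E_{n}=\sum_{j=1}^{d}\log|\sigma_{j}(W_{n})|=\sum_{v\mid\infty}n_{v}\log|W_{n}|_{v},
\]
where $n_{v}\in\{1,2\}$ is the local degree and $|\cdot|_{v}$ is the usual absolute value attached to $v$.

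The key step is to bound each archimedean summand. I would apply the preceding theorem at each infinite place $v$, obtaining $\log|W_{n}|_{v}\leq C_{v}(E,P)n^{2}$ for all $n$ with $N\nmid n$. Equivalently, I would redo that argument directly: $\log|W_{n}|_{v}=\sum_{i}\alpha_{i}(n)\log|g_{i}(E,P)|_{v}$, and $|\alpha_{i}(n)|\leq C_{i}n^{2}$ because $\alpha_{i}$ is one of finitely many quadratic polynomials, one for each residue class of $n$ modulo $N$. Summing over the finitely many archimedean places then gives
\[
\log E_{n}\leq n^{2}\sum_{v\mid\infty}n_{v}C_{v}(E,P),
\]
so the corollary holds with $C(E,P)=\sum_{v\mid\infty}n_{v}C_{v}(E,P)$, which is positive and depends only on $(E,P)$ and the fixed factors $g_{i}$.

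The only real obstacle is making sure the constant from the local theorem is genuinely uniform in $n$. That theorem's final step sets $C_{v}=\max_{i}C_{i}|\log|g_{i}(E,P)|_{v}|$, but the displayed sum actually requires $C_{v}$ to be at least $\sum_{i}C_{i}|\log|g_{i}(E,P)|_{v}|$. So I would define $C_{v}(E,P)$ as this sum, plus $1$ if needed to keep it strictly positive. Uniformity of $C_{i}$ over the residue classes is automatic, since there are only finitely many classes modulo $N$, each with a fixed polynomial of degree at most $2$. With that choice of constant the argument goes through without further work.
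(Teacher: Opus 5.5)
Your proposal is correct and is essentially the paper's argument. The paper applies $\mathrm{Nm}_{K/\mathbb{Q}}$ directly to the factorization, obtaining $\log E_n=\sum_i \alpha_i(n)\log|\mathrm{Nm}_{K/\mathbb{Q}}(g_i(E,P))|$, and then uses $|\alpha_i(n)|\le C_i n^2$; you split the norm over the archimedean places and sum the local bounds, which is the same computation with a possibly larger constant, and your fix that the local constant must be $\sum_i C_i|\log|g_i(E,P)|_v|$ (made positive) rather than the maximum is correct and is exactly what your route needs.
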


\begin{proof}
By the proof of Theorem~1.1, for $N\nmid n$ we have $f_{n}=\prod_{i=1}%
^{r}g_{i}^{\alpha_{i}(n)}$ on each residue class modulo $N$. Evaluating at
$(E,P)$ gives $W_{n}=\prod_{i=1}^{r}g_{i}^{\alpha_{i}(n)}(E,P) $. Taking the
absolute norm for $K/\mathbf{%
%TCIMACRO{\U{211a} }%
%BeginExpansion
\mathbb{Q}
%EndExpansion
}$, we obtain
\[
E_{n}=\left\vert \mathrm{Nm}_{K/\mathbf{%
%TCIMACRO{\U{211a} }%
%BeginExpansion
\mathbb{Q}
%EndExpansion
}}(W_{n})\right\vert =\prod_{i=1}^{r}\left\vert \mathrm{Nm}_{K/\mathbf{%
%TCIMACRO{\U{211a} }%
%BeginExpansion
\mathbb{Q}
%EndExpansion
}}(g_{i}(E,P))\right\vert ^{\alpha_{i}(n)}.
\]
Therefore
\[
\log E_{n}=\sum_{i=1}^{r}\alpha_{i}(n)\log\left\vert \mathrm{Nm}_{K/\mathbf{%
%TCIMACRO{\U{211a} }%
%BeginExpansion
\mathbb{Q}
%EndExpansion
}}(g_{i}(E,P))\right\vert \leq\sum_{i=1}^{r}|\alpha_{i}(n)\,|\,|\log\left\vert
\mathrm{Nm}_{K/\mathbf{%
%TCIMACRO{\U{211a} }%
%BeginExpansion
\mathbb{Q}
%EndExpansion
}}(g_{i}(E,P))\right\vert \,|.
\]
Since each function $\alpha_{i}(n)$ has degree at most $2$ on residue classes
modulo $N$, there exists a constant $C_{i}>0$ such that $|\alpha_{i}(n)|\leq
C_{i}n^{2}$ for all $n$ such that $N\nmid n$. Since the functions $\alpha
_{i}(n)$ have degree at most $2$ on residue classes modulo $N$, there is a
constant $C(E,P)>0$ such that
\[
\log E_{n}\leq C(E,P)n^{2}.
\]

\end{proof}

\section{Lifting and Norm Descent}

Let $(E,C)$ be a point of $Y_{0}(N)(K)$, where $C$ is a cyclic subgroup of
order $N$ and choose a generator $P\in C$, yielding a lift $(E,P)\in
Y_{1}(N)(K)$ and let $f_{n}$ be as in (\ref{1}). In this section, we establish
that evaluating the modular function $f_{n}$ at this lifted point, i.e.,
$W_{n}=f_{n}(E,P)$ defines an EDS. Furthermore we demonstrade that the
resulting factorization structure descents to $X_{0}(N)$ via the norm map.

\begin{proof}
[Proof of Theorem 1.2]Let $(E,C)\in Y_{0}(N)(K)$ be given. Since $C$ is a
cyclic subgroup of order $N$, any generator $P\in C$ determines a point
$(E,P)\in Y_{1}(N)(K)$, which is a lift of $(E,C)$ along the morphism
\[
\phi:Y_{1}(N)\rightarrow Y_{0}(N)\text{, }(E,P)\mapsto(E,\langle P\rangle)
\]
obtained by forgetting the generator, but keeping the group it generates; see
\cite[p.40]{DS}. By (\ref{1}), the functions
\[
f_{n}=\mathcal{P}^{\ast}(\psi_{n})
\]
are modular functions on $Y_{1}(N)$. Now evaluating at the lifted point
$(E,P)$ gives
\[
W_{n}=f_{n}(E,P)=\psi_{n}(P),
\]
which is the elliptic divisibility sequence associated to $(E,P)$. By the
proof of Theorem~1.1, the $f_{n}$ can be factored as
\[
f_{n}=\prod_{i=1}^{r}g_{i}^{\alpha_{i}(n)}\text{,}%
\]
and evaluating this at $(E,P)$ gives
\[
W_{n}=\prod_{i=1}^{r}g_{i}^{\alpha_{i}(n)}(E,P)\text{,}%
\]
where $g_{i}\in K(X_{1}(N))^{\times}$ and each $\alpha_{i}(n)$ is a polynomial
in $n$ of degree at most $2$ taking integer values on suitable residue classes
$n$ modulo $N$. Now let $kP$ be another generator of $C$, with $k\in
(\mathbb{Z}/N\mathbb{Z})^{\times}$. Then $(E,kP)$ is the image of $(E,P)$
under the natural action of $(\mathbb{Z}/N\mathbb{Z})^{\times}$ on $Y_{1}(N)$.
Since this action permutes the factors in $W_{n}$, the factorization structure
is preserved in the sense that the factors are transformed by the induced
action while the exponent pattern remains unchanged.
\end{proof}

We are now ready to prove Theorem 1.3.

\begin{proof}
[Proof of theorem 1.3]By the proofs of Theorems~1.1 and 1.2, the functions
$f_{n}$ are defined on $X_{1}(N)$ and their evaluation at $(E,P)$ is
compatible with the natural action of $(\mathbb{Z}/N\mathbb{Z})^{\times}$,
respectively. Since the morphism
\[
\phi:X_{1}(N)\rightarrow X_{0}(N)
\]
is a finite covering, the norm map
\[
\mathrm{Nm}_{K(X_{1}(N))/K(X_{0}(N))}:K(X_{1}(N))^{\times}\rightarrow
K(X_{0}(N))^{\times}%
\]
is well-defined, thus $F_{n}=\mathrm{Nm}_{K(X_{1}(N))/K(X_{0}(N))}(f_{n})\in
K(X_{0}(N))^{\times}$.

On the other hand, the function field extension $K(X_{1}(N))/K(X_{0}(N))$ is a
Galois extension with Galois group $(\mathbb{Z}/N\mathbb{Z})^{\times}$, where
the group acting on sections via $\mathcal{P}\mapsto k\mathcal{P}$,
$k\in(\mathbb{Z}/N\mathbb{Z})^{\times}$; see \cite[Section 7.5.]{DS}. Hence,
the function $F_{n}$ is defined as the norm of the $f_{n}$ over the group
$(\mathbb{Z}/N\mathbb{Z})^{\times}$, which is given by the product of
pull-backs along the sections $k\mathcal{P}$ as
\[
F_{n}=\prod_{k\in(\mathbb{Z}/N\mathbb{Z})^{\times}}(k\mathcal{P)}^{\ast}%
(\psi_{n})\text{,}%
\]
(see; \cite[Section 1.8]{KM}, for more details). Evaluating at $(E,C)$ yields
\[
F_{n}(E,C)=\prod_{k\in(\mathbb{Z}/N\mathbb{Z})^{\times}}f_{n}(E,kP).
\]

The independence of this factorization from the choice of generator $P$
follows from the proof of Theorem~1.2. Finally, applying the norm to the
factorization given in the proof of Theorem 1.1,
\[
f_{n}=\prod_{i=1}^{r}g_{i}^{\alpha_{i}(n)}%
\]
and using multiplicativity, we obtain
\[
F_{n}=\prod_{i=1}^{r}(\mathrm{Nm}_{K(X_{1}(N))/K(X_{0}(N))}(g_{i}%
))^{\alpha_{i}(n)}=\prod_{i=1}^{r}G_{i}^{\alpha_{i}(n)}\text{,}%
\]
where $G_{i}=\mathrm{Nm}_{K(X_{1}(N))/K(X_{0}(N))}(g_{i})$.
\end{proof}

\section{Appendix}

In this appendix, we give the explicit factorization formulas over a number
field $K$ for the levels $N\in\{11,13,14,15,16,18\}$. These cases are
naturally motivated by the arithmetic of torsion points over quadratic fields.
On the other hand, the levels $N$ $\in\{2,3,...,9,10,12\}$ were studied in
earlier works over $%
%TCIMACRO{\U{211a} }%
%BeginExpansion
\mathbb{Q}
%EndExpansion
$ in \cite{BG,BO1} and remain valid over any number field by base change. The
additional levels treated here correspond to the quadratic torsion
classification of Kenku and Momose \cite{KMO}. We give only the formulas for
the elliptic divisibility sequences associated to modular curves $Y_{1}(N)$ in
this appendix. A detailed proof is given in Section 3 for the case $N=13$. The
remaning levels can be proved in the same way, by using the induction and
elliptic divisibility sequence recurrence (\ref{6}).

For the remainder of the paper, we fix the models for $Y_{1}(N)$ given in
\cite[p. 1144]{SU} with coordinates $(u,v)$. For each level $N\in
\{11,13,14,15,16,18\}$, let $t_{(u,v)}\in Y_{1}(N)(K)$ be a $K$-rational point
on the chosen affine model where $K$ is a number field. Associated to $t$\ is
a pair $(E_{t},P_{t})$ where $E_{t}$ is the curve over $K(t)$ given by the
equation%
\[
E_{t}:y^{2}+(1-c)xy-by=x^{3}-bx^{2}%
\]
with%
\[
c=s(r-1),b=cr
\]
where $P_{t}=(0,0)$ is a point of exact order $N$, and $r$ and $s$ are given
functions of $u$ and $v$; see \cite[pp. 1144-1145]{SU}, for models and
birational maps. Evaluating the functions $f_{n}$ at $t$ recovers the
associated elliptic divisibility sequence
\[
W_{n}(t)=W_{n}(u,v)=f_{n}(E_{t},P_{t})\text{.}%
\]
We then express the functions $f_{n}$ explicitly in terms of modular
coordinates on $Y_{1}(N)$.%
\[
\]

\textbf{1. }$N=11$\textbf{.} Let $Y_{1}(11)$ be the affine model
\begin{equation}
v^{2}+(u^{2}+1)v+u=0
\end{equation}
with
\[
r=1+uv,s=1-u\text{.}%
\]
Then%
\begin{equation}
W_{n}(t)=\left\{
\begin{array}
[c]{cc}%
{\small \varepsilon}\prod_{i=1}^{4}g_{i}^{\alpha_{i}(n)}(t) & \text{if
}11\nmid n\\
0 & \text{if }11|n\text{,}%
\end{array}
\right.
\end{equation}
where
\[
g_{1}(t)=u,g_{2}(t)=v,g_{3}(t)=u-1,g_{4}(y)=v+1
\]
and
\begin{align*}
\alpha_{1}(n)  &  =(n^{2}-p_{1})/11,\alpha_{2}(n)=(8n^{2}-p_{2})/11,\\
\alpha_{3}(n)  &  =(4n^{2}-p_{3})/11,\alpha_{4}(n)=(4n^{2}-p_{4})/11
\end{align*}
with%
\[
{\small \varepsilon=}\left\{
\begin{array}
[c]{ll}%
{\small +1} & \text{if }{\small n}\text{ }{\small \equiv
1,10,13,14,15,16,17,18,19,20~(22)}\text{ \ \ }{\small \ }\\
{\small -1} & \text{if }{\small n}\text{ }{\small \equiv
2,3,4,5,6,7,8,9,12,21~(22)}\text{,}%
\end{array}
\right.
\]%
\[
{\small p}_{1}{\small =}\left\{
\begin{array}
[c]{cc}%
1 & \text{if }{\small n}\text{ }{\small \equiv1,10~(11)}\text{ \ }\\
4 & \text{if }{\small n}\text{ }{\small \equiv2,9~(11)}\text{ \ }\\
9 & \text{if }{\small n}\text{ }{\small \equiv3,8~(11)}\text{ \ }\\
5 & \text{if }{\small n}\text{ }{\small \equiv4,7~(11)}\text{ \ }\\
3 & \text{if }{\small n}\text{ }{\small \equiv5,6~(11)}\text{, \ }%
\end{array}
\right.  \newline{\small p}_{2}{\small =}\left\{
\begin{array}
[c]{cc}%
8 & \text{if }{\small n}\text{ }{\small \equiv1,10~(11)}\text{ \ }\\
10 & \text{if }{\small n}\text{ }{\small \equiv2,9~(11)}\text{ \ }\\
6 & \text{if }{\small n}\text{ }{\small \equiv3,8~(11)}\text{ \ }\\
7 & \text{if }{\small n}\text{ }{\small \equiv4,7~(11)}\text{ \ }\\
2 & \text{if }{\small n}\text{ }{\small \equiv5,6~(11)}\text{, \ }%
\end{array}
\right.
\]%
\[
{\small p}_{3}{\small =}\left\{
\begin{array}
[c]{cc}%
4 & \text{if }{\small n}\text{ }{\small \equiv1,10~(11)}\text{ \ }\\
5 & \text{if }{\small n}\text{ }{\small \equiv2,9~(11)}\text{ \ }\\
3 & \text{if }{\small n}\text{ }{\small \equiv3,8~(11)}\text{ \ }\\
-2 & \text{if }{\small n}\text{ }{\small \equiv4,7~(11)}\text{ \ }\\
1 & \text{if }{\small n}\text{ }{\small \equiv5,6~(11)}\text{, \ }%
\end{array}
\right.  {\small p}_{4}{\small =}\left\{
\begin{array}
[c]{cc}%
4 & \text{if }{\small n}\text{ }{\small \equiv1,10~(11)}\text{ \ }\\
5 & \text{if }{\small n}\text{ }{\small \equiv2,9~(11)}\text{ \ }\\
3 & \text{if }{\small n}\text{ }{\small \equiv3,8~(11)}\text{ \ }\\
9 & \text{if }{\small n}\text{ }{\small \equiv4,7~(11)}\text{ \ }\\
12 & \text{if }{\small n}\text{ }{\small \equiv5,6~(11)}\text{. \ }%
\end{array}
\right.
\]%
\[
\]

\textbf{2.} $N=14$\textbf{.} Let $Y_{1}(14)$ be the affine model%
\begin{equation}
v^{2}+(u^{2}+u)v+u=0
\end{equation}
and
\[
r=1-(u+v)/((v+1)(u+v+1)),s=(1-u)/(v+1).
\]
Then%
\begin{equation}
W_{n}(t)=\left\{
\begin{array}
[c]{cc}%
{\small \varepsilon}\prod_{i=1}^{6}g_{i}^{\alpha_{i}(n)}(t) & \text{if
}14\nmid n\\
0 & \text{if }14|n\text{,}%
\end{array}
\right.
\end{equation}
where
\[
g_{1}(t)=u-1,g_{2}(t)=v,g_{3}(t)=u+v,g_{4}(t)=v+1,
\]%
\[
g_{5}(t)=u+v+1,g_{6}(t)=v^{2}+uv+v+1,\newline%
\]
and
\[
\alpha_{1}(n)=(11n^{2}-p_{1})/28,\alpha_{2}(n)=(n^{2}-p_{2})/28,\alpha
_{3}(n)=(3n^{2}-p_{3})/7,
\]%
\[
\alpha_{4}(n)=-(8n^{2}-p_{4})/7,\alpha_{5}(n)=-(3n^{2}-p_{5})/4,\alpha
_{6}(n)=(9n^{2}-p_{6})/28,
\]
with%
\[
{\small \varepsilon=}\left\{
\begin{array}
[c]{ll}%
{\small +1} & \text{if }{\small n}\text{ }{\small \equiv
1,4,7,8,12,13,17,18,19,22,23,25,26~(28)}\text{ \ \ }{\small \ }\\
{\small -1} & \text{if }{\small n}\text{ }{\small \equiv
2,3,5,6,9,10,11,15,16,20,21,24,27~(28)},
\end{array}
\right.
\]%
\[
{\small p}_{1}{\small =}\left\{
\begin{array}
[c]{cc}%
11 & \text{if }{\small n}\text{ }{\small \equiv1,13~(14)}\\
16 & \text{if }{\small n}\text{ }{\small \equiv2,12~(14)}\\
15 & \text{if }{\small n}\text{ }{\small \equiv3,11~(14)}\\
8 & \text{if }{\small n}\text{ }{\small \equiv4,10~(14)}\\
23 & \text{if }{\small n}\text{ }{\small \equiv5,9~(14)}\\
32 & \text{if }{\small n}\text{ }{\small \equiv6,8~(14)}\\
35 & \text{if }{\small n}\text{ }{\small \equiv7~(14)},
\end{array}
\right.  \newline{\small p}_{2}{\small =}\left\{
\begin{array}
[c]{cc}%
1 & \text{if }{\small n}\text{ }{\small \equiv1,13~(14)}\\
4 & \text{if }{\small n}\text{ }{\small \equiv2,12~(14)}\\
9 & \text{if }{\small n}\text{ }{\small \equiv3,11~(14)}\\
16 & \text{if }{\small n}\text{ }{\small \equiv4,10~(14)}\\
25 & \text{if }{\small n}\text{ }{\small \equiv5,9~(14)}\\
36 & \text{if }{\small n}\text{ }{\small \equiv6,8~(14)}\\
49 & \text{if }{\small n}\text{ }{\small \equiv7~(14)},
\end{array}
\right.
\]%
\[
{\small p}_{3}{\small =}\left\{
\begin{array}
[c]{cc}%
3 & \text{if }{\small n}\text{ }{\small \equiv1,6,8,13~(14)}\\
5 & \text{if }{\small n}\text{ }{\small \equiv2,5,9,12~(14)}\\
6 & \text{if }{\small n}\text{ }{\small \equiv3,4,10,11~(14)}\\
0 & \text{if }{\small n}\text{ }{\small \equiv7~(14)},
\end{array}
\right.  {\small p}_{4}{\small =}\left\{
\begin{array}
[c]{cc}%
8 & \text{if }{\small n}\text{ }{\small \equiv1,6,8,13~(14)}\\
11 & \text{if }{\small n}\text{ }{\small \equiv2,5,9,12~(14)}\\
9 & \text{if }{\small n}\text{ }{\small \equiv3,4,10,11~(14)}\\
14 & \text{if }{\small n}\text{ }{\small \equiv7~(14)},
\end{array}
\right.
\]%
\[
{\small p}_{5}{\small =}\left\{
\begin{array}
[c]{cc}%
3 & \text{if }{\small n}\text{ }{\small \equiv1,3,5,7,9,11,13~(14)}\\
4 & \text{if }{\small n}\text{ }{\small \equiv2,4,6,8,10,12~(14)},
\end{array}
\right.  {\small p}_{6}{\small =}\left\{
\begin{array}
[c]{cc}%
9 & \text{if }{\small n}\text{ }{\small \equiv1,13~(14)}\\
8 & \text{if }{\small n}\text{ }{\small \equiv2,12~(14)}\\
-3 & \text{if }{\small n}\text{ }{\small \equiv3,11~(14)}\\
4 & \text{if }{\small n}\text{ }{\small \equiv4,10~(14)}\\
1 & \text{if }{\small n}\text{ }{\small \equiv5,9~(14)}\\
-12 & \text{if }{\small n}\text{ }{\small \equiv6,8~(14)}\\
-7 & \text{if }{\small n}\text{ }{\small \equiv7~(14)}.
\end{array}
\right.
\]%
\[
\]

\textbf{3.} $N=15$\textbf{. }\ Let $Y_{1}(15)$ be the affine model
\begin{equation}
v^{2}+(u^{2}+u+1)v+u^{2}=0
\end{equation}
and
\[
r=1+(uv+v^{2})/(u^{3}+u^{2}v+u^{2}),s=1+v/(u^{2}+u)\text{.}%
\]
Then%
\begin{equation}
W_{n}(t)=\left\{
\begin{array}
[c]{cc}%
{\small \varepsilon}\prod_{i=1}^{6}g_{i}^{\alpha_{i}(n)}(t) & \text{if
}15\nmid n\\
0 & \text{if }15|n\text{,}%
\end{array}
\right.
\end{equation}
where
\[
g_{1}(t)=u,g_{2}(t)=v,g_{3}(t)=u+1,
\]%
\[
g_{4}(t)=u+v,g_{5}(t)=u^{3}-v,g_{6}(t)=u+v+1,
\]
and
\[
\alpha_{1}(n)=-(11n^{2}-p_{1})/15,\alpha_{2}(n)=(n^{2}-p_{2})/15,\alpha
_{3}(n)=-(2n^{2}-p_{3})/5,
\]%
\[
\alpha_{4}(n)=(2n^{2}-p_{4})/5,\alpha_{5}(n)=(n^{2}-p_{5})/3,\alpha
_{6}(n)=-(17n^{2}-p_{6})/15,
\]

with%
\[
{\small \varepsilon=}\left\{
\begin{array}
[c]{ll}%
{\small +1} & \text{if }{\small n}\text{ }{\small \equiv1,2,3,4,6,8,10~(15)}%
\text{ \ \ }{\small \ }\\
{\small -1} & \text{if }{\small n}\text{ }{\small \equiv
5,7,9,11,12,13,14~(15)},
\end{array}
\right.
\]%
\[
{\small p}_{1}{\small =}\left\{
\begin{array}
[c]{cc}%
11 & \text{if }{\small n}\text{ }{\small \equiv1,14~(15)}\\
14 & \text{if }{\small n}\text{ }{\small \equiv2,13~(15)}\\
9 & \text{if }{\small n}\text{ }{\small \equiv3,12~(15)}\\
26 & \text{if }{\small n}\text{ }{\small \equiv4,11~(15)}\\
5 & \text{if }{\small n}\text{ }{\small \equiv5,10~(15)}\\
-9 & \text{if }{\small n}\text{ }{\small \equiv6,9~(15)}\\
-1 & \text{if }{\small n}\text{ }{\small \equiv7,8~(15)},
\end{array}
\right.  \newline{\small p}_{2}{\small =}\left\{
\begin{array}
[c]{cc}%
1 & \text{if }{\small n}\text{ }{\small \equiv1,14~(15)}\\
4 & \text{if }{\small n}\text{ }{\small \equiv2,7,8,13~(15)}\\
9 & \text{if }{\small n}\text{ }{\small \equiv3,12~(15)}\\
16 & \text{if }{\small n}\text{ }{\small \equiv4,11~(15)}\\
10 & \text{if }{\small n}\text{ }{\small \equiv5,10~(15)}\\
6 & \text{if }{\small n}\text{ }{\small \equiv6,9~(15)},
\end{array}
\right.
\]%
\[
{\small p}_{3}{\small =}\left\{
\begin{array}
[c]{cc}%
2 & \text{if }{\small n}\text{ }{\small \equiv1,4,6,9,11,14~(15)}\\
3 & \text{if }{\small n}\text{ }{\small \equiv2,3,7,8,12,13~(15)}\\
5 & \text{if }{\small n}\text{ }{\small \equiv5,10~(15),}%
\end{array}
\right.  {\small p}_{4}{\small =}\left\{
\begin{array}
[c]{cc}%
2 & \text{if }{\small n}\text{ }{\small \equiv1,4,6,9,11,14~(15)}\\
3 & \text{if }{\small n}\text{ }{\small \equiv2,3,7,8,12,13~(15)}\\
0 & \text{if }{\small n}\text{ }{\small \equiv5,10~(15),}%
\end{array}
\right.
\]%
\[
{\small p}_{5}{\small =}\left\{
\begin{array}
[c]{cc}%
1 & \text{if }{\small n}\text{ }{\small \equiv1,2,4,5,7,8,10,11,13,14~(15)}\\
0 & \text{if }{\small n}\text{ }{\small \equiv3,6,9,12~(15)},
\end{array}
\right.  {\small p}_{6}{\small =}\left\{
\begin{array}
[c]{cc}%
17 & \text{if }{\small n}\text{ }{\small \equiv1,4,11,14~(15)}\\
23 & \text{if }{\small n}\text{ }{\small \equiv2,7,8,13~(15)}\\
18 & \text{if }{\small n}\text{ }{\small \equiv3,12~(15)}\\
20 & \text{if }{\small n}\text{ }{\small \equiv5,10~(15)}\\
27 & \text{if }{\small n}\text{ }{\small \equiv6,9~(15)}.
\end{array}
\right.
\]%
\[
\]

\textbf{4. }$N=16$. Let $Y_{1}(16)$ be the affine model
\begin{equation}
v^{2}+(u^{3}+u^{2}-u+1)v+u^{2}=0
\end{equation}
with
\[
r=(u^{2}-uv+v^{2}+v)/(u^{2}+u-v-1),s=(u-v)/(u+1)\text{.}%
\]
Then%
\begin{equation}
W_{n}(t)=\left\{
\begin{array}
[c]{cc}%
{\small \varepsilon}\prod_{i=1}^{7}g_{i}^{\alpha_{i}(n)}(t) & \text{if
}16\nmid n\\
0 & \text{if }16|n\text{,}%
\end{array}
\right.
\end{equation}
where
\[
g_{1}(t)=u,g_{2}(t)=v,g_{3}(t)=u+1,g_{4}(t)=u-v,
\]%
\[
g_{5}(t)=v+1,g_{6}(t)=u-v-1,g_{7}(t)=u^{2}+u-v-1,
\]
\newline and
\[
\alpha_{1}(n)=(21n^{2}-p_{1})/32,\alpha_{2}(n)=(11n^{2}-p_{2})/32,\alpha
_{3}(n)=-(n^{2}-p_{3})/16,
\]%
\[
\alpha_{4}(n)=(3n^{2}-p_{4})/8,\alpha_{5}(n)=(7n^{2}-p_{5})/16,
\]%
\[
\alpha_{6}(n)=(13n^{2}-p_{6})/32,\alpha_{7}(n)=-(3n^{2}-p_{7})/4,
\]
with%
\[
{\small \varepsilon=}\left\{
\begin{array}
[c]{ll}%
{\small +1} & \text{if }{\small n}\text{ }{\small \equiv
1,5,6,7,10,13,17,18,20,21,23,24,28,29,30~(32)}\text{ \ \ }{\small \ }\\
{\small -1} & \text{if }{\small n}\text{ }{\small \equiv
2,3,4,8,9,11,12,14,15,19,22,25,26,27,31~(32)},
\end{array}
\right.
\]%
\[
{\small p}_{1}{\small =}\left\{
\begin{array}
[c]{cc}%
21 & \text{if }{\small n}\text{ }{\small \equiv1,15~(16)}\\
20 & \text{if }{\small n}\text{ }{\small \equiv2,14~(16)}\\
-3 & \text{if }{\small n}\text{ }{\small \equiv3,13~(16)}\\
16 & \text{if }{\small n}\text{ }{\small \equiv4,12~(16)}\\
13 & \text{if }{\small n}\text{ }{\small \equiv5,11~(16)}\\
-12 & \text{if }{\small n}\text{ }{\small \equiv6,10~(16)}\\
-27 & \text{if }{\small n}\text{ }{\small \equiv7,9~(16)}\\
0 & \text{if }{\small n}\text{ }{\small \equiv8~(16)},
\end{array}
\right.  \newline{\small p}_{2}{\small =}\left\{
\begin{array}
[c]{cc}%
11 & \text{if }{\small n}\text{ }{\small \equiv1,15~(16)}\\
12 & \text{if }{\small n}\text{ }{\small \equiv2,6,10,14~(16)}\\
3 & \text{if }{\small n}\text{ }{\small \equiv3,13~(16)}\\
16 & \text{if }{\small n}\text{ }{\small \equiv4,12~(16)}\\
19 & \text{if }{\small n}\text{ }{\small \equiv5,11~(16)}\\
27 & \text{if }{\small n}\text{ }{\small \equiv7,9~(16)}\\
32 & \text{if }{\small n}\text{ }{\small \equiv8~(16)},
\end{array}
\right.
\]%
\[
{\small p}_{3}{\small =}\left\{
\begin{array}
[c]{cc}%
1 & \text{if }{\small n}\text{ }{\small \equiv1,7,9,15~(16)}\\
4 & \text{if }{\small n}\text{ }{\small \equiv2,6,10,14~(16)}\\
9 & \text{if }{\small n}\text{ }{\small \equiv3,5,11,13~(16)}\\
0 & \text{if }{\small n}\text{ }{\small \equiv4,8,12~(16)},
\end{array}
\right.  {\small p}_{4}{\small =}\left\{
\begin{array}
[c]{cc}%
3 & \text{if }{\small n}\text{ }{\small \equiv1,3,5,7,9,11,13,15~(16)}\\
4 & \text{if }{\small n}\text{ }{\small \equiv2,6,10,14~(16)}\\
0 & \text{if }{\small n}\text{ }{\small \equiv4,8,12~(16),}%
\end{array}
\right.
\]%
\[
{\small p}_{5}{\small =}\left\{
\begin{array}
[c]{cc}%
7 & \text{if }{\small n}\text{ }{\small \equiv1,7,9,15~(16)}\\
12 & \text{if }{\small n}\text{ }{\small \equiv2,6,10,14~(16)}\\
15 & \text{if }{\small n}\text{ }{\small \equiv3,5,11,13~(16)}\\
16 & \text{if }{\small n}\text{ }{\small \equiv4,12~(16)}\\
0 & \text{if }{\small n}\text{ }{\small \equiv8~(16)},
\end{array}
\right.  {\small p}_{6}{\small =}\left\{
\begin{array}
[c]{cc}%
13 & \text{if }{\small n}\text{ }{\small \equiv1,15~(16)}\\
20 & \text{if }{\small n}\text{ }{\small \equiv2,6,10,14~(16)}\\
21 & \text{if }{\small n}\text{ }{\small \equiv3,13~(16)}\\
16 & \text{if }{\small n}\text{ }{\small \equiv4,12~(16)}\\
5 & \text{if }{\small n}\text{ }{\small \equiv5,11~(16)}\\
29 & \text{if }{\small n}\text{ }{\small \equiv7,9~(16)}\\
32 & \text{if }{\small n}\text{ }{\small \equiv8~(16)},
\end{array}
\right.
\]%
\[
{\small p}_{7}{\small =}\left\{
\begin{array}
[c]{cc}%
3 & \text{if }{\small n}\text{ }{\small \equiv1,3,5,7,9,11,13,15~(16)}\\
4 & \text{if }{\small n}\text{ }{\small \equiv2,4,6,8,10,12,14~(16)}.
\end{array}
\right.
\]%
\[
\]

\textbf{5.} $N=18$\textbf{.} Let $Y_{1}(18)$ be the affine model
\begin{equation}
v^{2}+(u^{3}-2u^{2}+3u+1)v+2u=0
\end{equation}
with
\[
r=(u^{2}-uv-3u+1)/((u-1)^{2}(uv+1)),s=(u^{2}-2u-v)/(u^{2}-uv-3u-v^{2}%
-2v)\text{.}%
\]
Then%
\begin{equation}
W_{n}(t)=\left\{
\begin{array}
[c]{cc}%
{\small \varepsilon}\prod_{i=1}^{8}g_{i}^{\alpha_{i}(n)}(t) & \text{if
}18\nmid n\\
0 & \text{if }18|n\text{,}%
\end{array}
\right.
\end{equation}
where
\[
g_{1}(t)=u,g_{2}(t)=v,g_{3}(t)=u+v,g_{4}(t)=v+1,g_{5}(t)=u-1,
\]%
\[
g_{6}(t)=u^{2}v+u-uv+v,g_{7}(t)=u^{2}-uv-3u+1,g_{8}(t)=uv+1
\]
\newline and
\[
\alpha_{1}(n)=(7n^{2}-p_{1})/18,\alpha_{2}(n)=p_{2},\alpha_{3}(n)=(5n^{2}%
-p_{3})/12,
\]%
\[
\alpha_{4}(n)=(n^{2}-p_{4})/12,\alpha_{5}(n)=-(17n^{2}-p_{5})/9,\alpha
_{6}(n)=-(n^{2}-p_{6})/36,
\]%
\[
\alpha_{7}(n)=(n^{2}-p_{7})/3,\alpha_{8}(n)=-(3n^{2}-p_{8})/4,
\]
with%
\[
{\small \varepsilon=}\left\{
\begin{array}
[c]{ll}%
{\small +1} & \text{if }{\small n}\text{ }{\small \equiv
1,4,5,8,10,13,14,17,20,21,24,25,27,29,30,33,34~(36)}\text{ \ \ }{\small \ }\\
{\small -1} & \text{if }{\small n}\text{ }{\small \equiv
2,3,6,7,9,11,12,15,16,19,22,23,26,28,31,32,35~(36)},
\end{array}
\right.
\]%
\[
{\small p}_{1}{\small =}\left\{
\begin{array}
[c]{cc}%
7 & \text{if }{\small n}\text{ }{\small \equiv1,17~(18)}\\
10 & \text{if }{\small n}\text{ }{\small \equiv2,16~(18)}\\
9 & \text{if }{\small n}\text{ }{\small \equiv3,9,15~(18)}\\
4 & \text{if }{\small n}\text{ }{\small \equiv4,14~(18)}\\
13 & \text{if }{\small n}\text{ }{\small \equiv5,13~(18)}\\
18 & \text{if }{\small n}\text{ }{\small \equiv6,12~(18)}\\
19 & \text{if }{\small n}\text{ }{\small \equiv7,11~(18)}\\
16 & \text{if }{\small n}\text{ }{\small \equiv8,10~(18)},
\end{array}
\right.  \newline{\small p}_{2}{\small =}\left\{
\begin{array}
[c]{cc}%
1 & \text{if }{\small n}\text{ }{\small \equiv9~(18)}\\
0 & \text{otherwise }%
\end{array}
\right.
\]%
\[
{\small p}_{3}{\small =}\left\{
\begin{array}
[c]{cc}%
5 & \text{if }{\small n}\text{ }{\small \equiv1,5,7,11,13,17~(18)}\\
8 & \text{if }{\small n}\text{ }{\small \equiv2,4,8,10,14,16~(18)}\\
9 & \text{if }{\small n}\text{ }{\small \equiv3,9,15~(18)}\\
0 & \text{if }{\small n}\text{ }{\small \equiv6,12~(18)},
\end{array}
\right.  {\small p}_{4}{\small =}\left\{
\begin{array}
[c]{cc}%
1 & \text{if }{\small n}\text{ }{\small \equiv1,17~(18)}\\
4 & \text{if }{\small n}\text{ }{\small \equiv2,16~(18)}\\
9 & \text{if }{\small n}\text{ }{\small \equiv3,15~(18)}\\
16 & \text{if }{\small n}\text{ }{\small \equiv4,8,10,14~(18)}\\
13 & \text{if }{\small n}\text{ }{\small \equiv5,7,11,13~(18)}\\
12 & \text{if }{\small n}\text{ }{\small \equiv6,12~(18)}\\
21 & \text{if }{\small n}\text{ }{\small \equiv9~(18)},
\end{array}
\right.
\]%
\[
{\small p}_{5}{\small =}\left\{
\begin{array}
[c]{cc}%
17 & \text{if }{\small n}\text{ }{\small \equiv1,8,10,17~(18)}\\
23 & \text{if }{\small n}\text{ }{\small \equiv2,7,11,16~(18)}\\
18 & \text{if }{\small n}\text{ }{\small \equiv3,6,12,15~(18)}\\
20 & \text{if }{\small n}\text{ }{\small \equiv4,5,13,14~(18)}\\
27 & \text{if }{\small n}\text{ }{\small \equiv9~(18)},
\end{array}
\right.  {\small p}_{6}{\small =}\left\{
\begin{array}
[c]{cc}%
1 & \text{if }{\small n}\text{ }{\small \equiv1,17~(18)}\\
4 & \text{if }{\small n}\text{ }{\small \equiv2,16~(18)}\\
9 & \text{if }{\small n}\text{ }{\small \equiv3,9,15~(18)}\\
16 & \text{if }{\small n}\text{ }{\small \equiv4,14~(18)}\\
25 & \text{if }{\small n}\text{ }{\small \equiv5,13~(18)}\\
0 & \text{if }{\small n}\text{ }{\small \equiv6,12~(18)}\\
13 & \text{if }{\small n}\text{ }{\small \equiv7,11~(18)}\\
28 & \text{if }{\small n}\text{ }{\small \equiv8,10~(18)},
\end{array}
\right.
\]%
\begin{align*}
{\small p}_{7}  &  {\small =}\left\{
\begin{array}
[c]{cc}%
1 & \text{if }{\small n}\text{ }{\small \equiv
1,2,4,5,7,8,10,11,13,14,16,17~(18)}\\
0 & \text{if }{\small n}\text{ }{\small \equiv3,6,9,12,15~(18)},
\end{array}
\right. \\
{\small p}_{8}  &  {\small =}\left\{
\begin{array}
[c]{cc}%
3 & \text{if }{\small n}\text{ }{\small \equiv1,3,5,7,9,11,13,15,17~(18)}\\
4 & \text{if }{\small n}\text{ }{\small \equiv2,4,6,8,10,12,14,16~(18)}.
\end{array}
\right.
\end{align*}

\textbf{Funding. }The second author is supported by the Scientific and
Technological Research Council of T\"{u}rkiye (T\"{U}B\.{I}TAK)-2211.


\begin{thebibliography}{99}                                                                                               %


\bibitem {CH}J. Cheon, S. Hahn, Explicit valuations of division polynomials of
an elliptic curve, Manuscripta Math. \textbf{97} (1998), 319-328.

\bibitem {CR}G. Cornelissen, J. Reynolds, The perfect power problem for
elliptic curves over function fields, N.Y. J. Math. 22 (2016) 95--114

\bibitem {DS}F. Diamond F. and J. Shurman, A First Course in Modular Forms,
Graduate Texts in Mathematics 228, Springer-Verlag, New York 2005.

\bibitem {GE1}G. Everest, A. van der Poorten, I. Shparlinski, T. Ward,
Recurrence Sequences, Mathematical Surveys and Monographs 104, AMS,
Providence, RI, 2003.

\bibitem {GE2}G. Everest, A. and T. Ward, The canonical height of an algebraic
point on an elliptic curve, New York J. Math. \textbf{6} (2000) 331-342.

\bibitem {BG}B. Gezer, \textit{Elliptic divisibility sequences}%
,\textit{\ squares and cubes}, Publ. Math. Debrecen \textbf{83} (3), 481-515, 2013.

\bibitem {BO1}B. Gezer, O. Bizim, \textit{Squares in elliptic divisibility
sequences}, Acta Arithmetica \textbf{144} (2), 125-134, 2010.

\bibitem {IP}P. Ingram, V. Mahe, J. H. Silverman, K. E. Stange and M. Streng.
Algebraic divisibility sequences over function fields. J. Austr. Math. Soc.
\textbf{92 }(2012), 99-126.

\bibitem {KM}N. M. Katz and B. Mazur, Arithmetic Moduli of Elliptic Curves,
Ann. of Math. Stud., 108. Princeton Univ. Press, New Jersey, 1985.

\bibitem {KMO}M. A. Kenku and F. Momose, Torsion points on elliptic curves
defined over quadratic fields, Nagoya Math. J. \textbf{109} (1988) 125-149.

\bibitem {N1}B. Naskrecki, Divisibility sequences of polynomials and heights
estimates, N.Y. J. Math. \textbf{22} (2016) 989-1020.

\bibitem {N2}B. Naskrecki and M. Streng. Primitive divisors of elliptic
divisibility sequences over function fields with constant j-invariant. J. Num.
Theory \textbf{213} (2020), 152-186.

\bibitem {ReneS}R. Schoof, Elliptic curves over finite fields and the
computation of square roots mod $p$, Math. Comp. \textbf{44} (170) (1985), 483-494.

\bibitem {RS}R. Shipsey, Elliptic divisibility sequences, PhD Thesis,
Goldsmith's (University of London), 2000.

\bibitem {SP}I. E. Shparlinski, K. E. Stange, Character sums with division
polynomials, Canad. Math. Bull. \textbf{55 }(4) (2012), 850-857.

\bibitem {JS7}J. H. Silverman, Advanced Topics in the Arithmetic of Elliptic
Curves, Graduate Texts in Mathematics 151, Springer-Verlag, New York, 1994.

\bibitem {JS1}J. H. Silverman, The Arithmetic of Elliptic Curves 2nd Edition,
Graduate Texts in Mathematics 106, Springer Springer-Verlag, New York 2009.

\bibitem {JS5}J. H. Silverman, Computing heights on elliptic curves, Math.
Comp. \textbf{51} (183) (1988), 339-358.

\bibitem {JS3}J. H. Silverman, $p$-adic properties of division polynomials and
elliptic divisibility sequences, Math. Ann., \textbf{332 }(2) (2005), 443-471,
addendum 473-474.

\bibitem {JS4}J.H. Silverman, N. Stephens, The sign of an elliptic
divisibility sequence, J. Ramanujan Math. Soc. \textbf{21} (1) (2006), 1-17.

\bibitem {JS2}J. H. Silverman and J. Tate, Rational Points on Elliptic Curves,
Undergraduate Texts in Mathematics, Springer, 1992.

\bibitem {SU}A. V. Sutherland, Constructing elliptic curves over finite fields
with prescribed torsion, Math. Comp. \textbf{81} (278) (2012), 1131-1147.

\bibitem {CS}C. S. Swart, Elliptic curves and related sequences, PhD Thesis,
Royal Holloway (University of London), 2003.

\bibitem {MW1}M. Ward, Memoir on elliptic divisibility sequences, Amer. J.
Math., \textbf{70} (1948), 31-74.

\bibitem {MW2}M. Ward, The law of repetition of primes in an elliptic
divisibility sequences, Duke Math. J. \textbf{15} (1948), 941-946.\bigskip
\end{thebibliography}
\end{document}